\newtheorem{thm}{Theorem}[section]
\newtheorem{corol}[thm]{Corollary}
\newtheorem{prop}[thm]{Proposition}
\newtheorem{defin}[thm]{Definition}
\theoremstyle{remark}
\newtheorem{rem}[thm]{Remark}
\newtheorem{ex}[thm]{Example}
\newenvironment{remark}{\begin{rem}\rm}{\end{rem}}
\newcommand{\cN}{\mathcal{N}}
\newcommand{\shs}{\hspace{.2cm}}
 \newcommand{\su}{\operatorname{\mathbf{su}}}
\def\bS{{\boldmath \Sigma}}
\newcommand{\PP}{{\mathbb P}}
\newcommand{\C}{{\mathbb C}}
\newcommand{\Z}{{\mathbb Z}}
\newcommand{\g}{{\mathfrak g}}
\newcommand{\T}{{\mathfrak a}}
\newcommand{\D}{{\mathfrak d}}
\newcommand{\E}{{\mathfrak e}}
\newcommand{\qee}{\mbox{\hspace{0.2mm}}\hfill$\triangle$}
\newcommand{\marginnote}[1]{\ifthenelse{\isodd{\thepage}}{\normalmarginpar}
{\reversemarginpar}\marginpar{\fbox{\parbox{28mm}{\sloppy\footnotesize #1}}}}
\begin{document}
\bigskip
\title{Geometry and Topology of String Junctions}
\author{\small Antonella Grassi$^\P$, James Halverson $^{\S}$ and Julius L. Shaneson$^\P$}
\address{\vskip-10pt\rm $^\P$ Department of Mathematics, University of Pennsylvania,\\
David Rittenhouse Laboratory, 209 S 33rd Street,\\ Philadelphia, PA 19104, USA
}
\address{\rm $^\S$ Kavli Institute for Theoretical Physics \\
University of California
Santa Barbara, CA 93101, USA}
\address{\rm $^\S$ Department of Physics \\
Northeastern University
Boston, MA 02115, USA}
\thanks{E-mail: {\tt grassi@sas.upenn.edu, j.halverson@neu.edu, shaneson@math.upenn.edu}}
\date{\today}
\begin{abstract}
We study elliptic fibrations by analyzing suitable deformations of the fibrations and vanishing cycles.  We introduce geometric string junctions and describe some of their properties. We show how  the geometric string junctions   manifest the  structure of the  Lie algebra of  the Dynkin diagrams  associated  to the singularities of the elliptic fibration. One application  in physics is in F-theory, where our novel approach connecting deformations and Lie algebras describes the structure of generalized type IIB seven-branes and string junction states which end on them.
\end{abstract}

\maketitle

\vspace{-.4cm}
\section{Introduction}
An elliptic fibration is  a morphism $\pi:X\rightarrow B$ such that $\pi^{-1}(p)= E_p$ for a general point $p\in B$ is a smooth elliptic elliptic curve\ (a torus $T^2$); the discriminant locus is
 $\Sigma = \{q \in B\, \text{ such that }
\pi^{-1}(q)\ne T^2 \}$.
In this paper we take $X$ and $B$ to be smooth;  if  $\pi$ is also assumed to have a section $\sigma$, $X$ is the (smooth) resolution of  the Weierstrass model $W$ of the
fibration
\cite{Na88} with Gorenstein singularities. $W$ is defined by the Weierstrass equation:
$ y^2z-(x^3+f xz^2+g z^3) = 0, $ where
$f, g$ are sections of appropriate bundles on $B$.
If $\dim W=2$,  the singularities are the rational double points. It was noted by Du Val and Coxeter \cite{DV, coxeter:weyl}
that rational double points
are classified by the Dynkin diagrams of the
simply laced Lie algebras $\g$ of type $\T_n, \D_n, \E_6, \E_7, \E_8$:
 in fact,
the dual diagram of the exceptional divisors in the minimal resolution
is one of the above Dynkin diagrams. In the case of higher dimensional elliptic fibrations also non simply-laced Dynkin diagram can occur.

We study elliptic fibrations by analyzing suitable deformations of the fibrations; we introduce ``geometric string junctions".
String junctions  were defined in the physics literature by DeWolfe, Gaberdiel and Zwiebach  \cite{GaberdielZwiebach, DeWolfeZwiebach} as equivalence relations of closed paths in a punctured disc $\C \setminus \bS$; the disc is the base of an elliptic fibration   with discriminant locus $\bS$. The authors assign composition rules  and  show that the junctions reflect the structure of exceptional gauge algebras of the elliptic fibration. The gauge algebras which arise in this construction are the simply laced ones $\T, \D, \E$. Bonora and Savelli \cite{BonoraSavelli}  later derived some non-simply laced algebras from junctions; their construction is algebraic and not expressed in terms of the geometry of higher dimensional elliptic fibrations. We will do this later in the paper.
The techniques presented in this paper have a number of applications in physics; for example, in describing BPS states of $d=4$ $\cN=2$ supersymmetric gauge theories, or in F-theory where they provide a direct approach to the analysis of generalized type IIB seven-branes and the string junction states which end on them
 \cite{V1,MV, MVII,DouglasParkSchnell}.

In Section \ref{geometricstrings}  we consider a smooth elliptic surface  on the open  unit disc  $U$
 with  nodal singular fibers over a collection of points $\{q_j\}$.  We then consider suitable, disjoint,  paths  in $U$ from a base point to each $q_j$ (a junction, in the physics language) and the corresponding vanishing cycles and construct thimbles, the prongs in the physics language, in  the relative homology.
A  general \emph{geometric junction} $J$ defines then a chain with boundary in the elliptic fiber over the base point $p$.  Following the physics literature we define the asymptotic charge $a(J) \in H_1(E_p,\Z)$,
 $a(J) = \partial [J]_p \in H_1(E_p)$.
We show that the junctions with asymptotic charge zero are  the images of spherical classes  in $[J] \in H_2(X)$ (Theorems \ref{jct}, \ref{emb}).
We then define a self-intersection product in the space of junctions,  and  we show that if $a(J) = 0$ the topological intersection is equal to the the self-intersection product (Theorem \ref{int}).
We also define an intersection pairing
$\langle J, K \rangle $, which we show it coincides with the topological pairing for junctions of zero asymptotic charge. We provide an explicit formula in terms of the $J_i$ and the intersection numbers of the vanishing cycles.
 If $X=W$ is a Weierstrass model we also provide an explicit alternative descriptions of the class of junctions, which is implemented in a computer program in \cite{sage}, \cite{JimPackage}.
In Section \ref{DefAndMore} we consider a smooth elliptic surface in Weierstrass equation over a disc, with a unique singular fiber over the origin, an ADE singularity.   Klein showed that  resolutions and deformation of ADE surface singularities (also known as {kleinan} singularities) are diffeomorphic.
 We  deform the elliptic fibration to a smooth fibration with nodal fibers, namely we perform a complete Higgsing of the Weierstrass model.  We  study the junctions in the deformed model and  we  prove that  the lattice structure found in the previous Section \ref{geometricstrings} provides the weight structure of the $\T, \D, \E$ algebras associated to the Dynkin diagram of the original singularities. As  a particular case,  we obtain the roots of the $\T, \D, \E$ central singularity and the associated Cartan matrix from the junctions with asymptotic charge zero.  Our deformation analysis of the surface case show that the junctions with a fixed  non-zero asymptotic charge are associated to weights of other representations and all  the possible weights occur; we show that in higher dimensional variety these weights can become associated to junctions of non-zero asymptotic  charge and assume geometric meaning, they become massless in the physics language \cite{GrassiHalversonShaneson2013, GrassiHalversonShaneson2014}.  In contrast analysis of the resolved surface provides only the root structure. 
 
 In addition we show that the deformation analysis distinguish the Kodaira type $III$ (two tangent rational curves) from $I_2$ (a cycle of two rational curves), which are associated to the $\mathfrak{su}(2)$ gauge algebra, and the Kodaira type $IV$ (three rational curves meeting at point) from $I_3$ (a cycle of three rational curves, which are associated to the $\mathfrak{su}(3)$ gauge algebra. This reflects in physics the different brane structure of the two singularities. These results were first presented in our previous papers \cite{GrassiHalversonShaneson2014, GrassiHalversonShaneson2013}, and were obtained with the aid of a computer package especially developed \cite{JimPackage}.
In \cite{GrassiHalversonShaneson2014}  we show  also that the local deformation techniques  of the string junction analysis can be adapted in compact cases,  even in cases when global deformation do not exist and also in higher dimension.
In Section \ref{G2} we revisit the example of the ${\mathfrak g}_2$ algebra first presented in \cite{GrassiHalversonShaneson2013}, and elliptic fibration of
threefolds. 

The techniques developed in Section \ref{geometricstrings} do not assume the existence of a section of the fibration, and in principle can be applied also in that case. Resolutions to a smooth model with trivial canonical class and an equidimensional fibration may  not  be available in higher dimension, minimal models can have terminal singularities,  however  the deformation analysis can still be performed. We will address such situations in a upcoming paper \cite{GrassiHalversonShaneson:TerminalBraids}.   Our techniques can also be extended to other type of fibrations between  varieties which are not necessarily algebraic, for example on varieties with $G_2$ holonomy,  

\vspace{.1cm}
 \noindent{\bf Acknowledgement.} We thank P. Aluffi and M. Esole for
 organizing the Spring 2014 AMS Special Session ``Singularities and Physics". We also are grateful to the referee for her/his useful comments.
   A.G. was in part supported by the NSF
 Research Training Group Grant GMS-0636606. The
 research of J.H. was supported in part by the National Science
 Foundation under Grant No. PHY11-25915.  J.L.S. is supported by
 DARPA, fund no. 553700 and is the Class of 1939 Professor in the
 School of Arts and Sciences of the University of Pennsylvania and
 gratefully acknowledges the generosity of the Class of 1939.  We also thank the Simons
 Center for Geometry and Physics.

\section{Geometric String Junctions}\label{geometricstrings}

Consider the smooth elliptic fibration $\pi:X\longrightarrow U$ over
an open set $U\subset \C$ with $I_1$ singular fibers above $\Sigma =
\{q_1\, \dots, q_N \}$. Fix a base point $p\in B\setminus \Sigma$ with
$E_p$ the elliptic fiber $\pi^{-1}(p)$. Choose a set of continuous embedded paths $\delta_1,...,
\delta_N\,,$ assumed disjoint except for the common starting point $\delta_j(0) = p\,,$ ending at the points $\delta_j(1) = q_j\,.$ Also assume the order is such that the points in which the paths meet a small circle around $p$ go around counter clockwise; for example for a small r, $\delta_j (r) = re^{2\pi i\frac{ (j-1)}{N}}$ in  suitable local coordinates around $q$ as the origin. We also assume that for $\epsilon$ a small
real number, there is also the formula $\delta_j(1-\epsilon) = \epsilon\, e^{2\pi
i\theta_j}\,,$ $\theta_j$ an "angle" in suitable local coordinates around $q_j$ as the origin. (The local co-ordinates around $q_j$ could of course be chosen so $\theta_j = 0\,,$ but below we will use other paths coming in to $q_j$ at different angles.)


\vskip 0.2in

\begin{center}
\scalebox{.55}{
  \begin{tikzpicture}
    \fill[thick] (0:40mm) circle (1mm);
    \node at (0:45mm) {$q_1$};
    \draw[thick] (0,0) -- (0:40mm);
    \node at (5:30mm) {$\delta_1$};
    \fill[thick] (30:40mm) circle (1mm);
    \node at (30:45mm) {$q_2$};
    \draw[thick] (0,0) -- (30:40mm);
    \node at (36:30mm) {$\delta_2$};
    \fill[thick] (-30:40mm) circle (1mm);
    \node at (-30:45mm) {$q_N$};
    \draw[thick] (0,0) -- (-30:40mm);
    \node at (-35:30mm) {$\delta_N$};
    \fill[thick] (0:40mm) circle (1mm);
    \node at (180:45mm) {$q_j$};
    \node at (5:30mm) {$\delta_1$};
    \draw[thick] (0,0) -- (180:40mm);
    \node at (175:30mm) {$\delta_j$};
    \fill[thick] (30:40mm) circle (1mm);
    \fill[thick] (180:40mm) circle (1mm);
    \fill[thick] (60:40mm) circle (1mm);
    \fill[thick] (90:40mm) circle (1mm);
    \fill[thick] (120:40mm) circle (1mm);
    \fill[thick] (150:40mm) circle (1mm);
    \fill[thick] (210:40mm) circle (1mm);
    \fill[thick] (240:40mm) circle (1mm);
    \fill[thick] (270:40mm) circle (1mm);
    \fill[thick] (300:40mm) circle (1mm);
    \draw[thick,dashed] (0mm,0mm) circle (10mm);
    \node at (-2mm,-2mm) {$P$};BaPeVV84
    \fill[thick] (0,0) circle (.5mm);
\end{tikzpicture}
}
\end{center}

Any smooth fiber bundle $Y \to S^1$ over a circle is given by a monodromy of the fiber $F\,.$ This means there is a diffeomorphism \cite{Steenrod1957}.  In our case, over the circle of radius $\epsilon$ around $q_j,,$ the corresponding diffeomorphism $\psi_{j,1-\epsilon}: E_{\delta_j(1-\epsilon)} \to E_{\delta_j(1-\epsilon)}$ will be referred to as the {\it local monodromy} around $q_j\,.$

The assumption that the singular fibers are of type  $I_1$  means that there is a real curve $C_{j,1-\epsilon}$ on the fiber $E_{\delta_j(1-\epsilon)}\,,$ $\epsilon$ small, which the  local monodromy $\psi_{j,1-\epsilon}$ fixes, where:
 $$\psi_{j,1-\epsilon}: E_{\delta_j(1-\epsilon)} \to E_{\delta_j(1-\epsilon)}.$$
  This curve collapses to a point $\hat q_j$ as $\epsilon \to 0\,;$ $\hat q_j$ is the (nodal) singular point in the singular fiber $\pi^{-1}(q_j)\,.$ With any choice of orientation for this curve, the map induced on first homology, ${(\psi_{j,1-\epsilon})}_* : H_1(E_{\delta_j(1-\epsilon)}) \to H_1(E_{\delta_j(1-\epsilon)})\,,$ satisfies \cite{Kodaira}, \cite{BaPeVV84}.

\begin{equation}\label{pl} {(\psi_{j,1-\epsilon})}_*(x) = x - (x\cdot [C_{j,1-\epsilon}])\,[C_{j,1-\epsilon}]\qquad, \qquad x \in H_1(E_{\delta_j(1-\epsilon)})\,.\end{equation}
The equation is a special case of the Picard-Lefshetz formula for this situation. Here
$x\cdot [C_{j,1-\epsilon}]$ is the intersection number of $x$ with the homology class
$[C_{j,1-\epsilon}]$ of the curve $C_{j,1-\epsilon}\,.$

Fix a small $\epsilon_0\,.$ The fibration $\pi$ is trivial over the contractible set $\delta_j([0,1-\epsilon_0])\,;$ let

\begin{equation} \Psi_j: [0,1-\epsilon_0] \times E_{\delta_j(1-\epsilon_0)} \to \pi^{-1}(\delta_j([0,1-\epsilon_0]))\end{equation}
be a trivialization with $\Psi_j(1-\epsilon_0,z) = z\,.$ Then we define the vanishing cycle $\gamma_j \in H_1(E_p)$ as
\begin{equation} \gamma_j = {(\Psi_j|\{0\}\times E_{\delta_j(1-\epsilon_0)})}_*[C_{j,1-\epsilon_0}]\,.\end{equation}
This is the same as the homology class $[C_j]$ of the curve $C_ j = \Psi_j|\{0\}\times E_{\delta_j(1-\epsilon_0)}(C_{j,1-\epsilon_0}) \subset E_p$ that is is the image of the curve $C_{j,1-\epsilon_0} \subset E_{\delta_j(1-\epsilon_0)}\,,$ and we also set
\begin{equation}C_{j,t} = \Psi_j|\{t\}\times E_{\delta_j(1-\epsilon_0)}(C_{j,1-\epsilon_0})\,,\end{equation}
 so $C_j = C_{j,0}\,.$
The class $\gamma_j$ is only defined up to sign, but we will systematically suppress this ambiguity (however see Corollary \ref{alg} below). Finally, we can use the diffeomorphism $\Psi_j|\{0\}\times E_{\delta_j(1-\epsilon_0)}$ of $E_{\delta_j(1-\epsilon_0)}$ to $E_p$ to tranfer the local monodromy at $q_j$ to a {\it global monodromy} $\psi_i : E_p \to E_p$ that fixes $C_j$ and also satisfies the Picard-Lefshetz formula \ref{pl}. The topological type of the fibration is determined by the isotopy classes of the global monodromies \cite{GrassiHalversonShaneson:Math}.

We define the ``prong" (physics terminology) or ``thimble" (symplectic geometry terminology)
\begin{equation} \Gamma_j = \Psi_j([0,1-\epsilon_0]\times C_{j,1-\epsilon_0})\cup \bigcup_{0 <  \epsilon \le\epsilon_0}C_{j,1-\epsilon} \cup \{\hat q_j\}\,,\end{equation}
and more generally we will need to use
\begin{equation} \Gamma_{j,t} = \Psi_j([t,1-\epsilon_0]\times C_{j,1-\epsilon_0})\cup \bigcup_{0 <  \epsilon \le\epsilon_0}C_{j,1-\epsilon} \cup \{\hat q_j\}\,.\end{equation}
The prong is a disk and represents a class $[\Gamma_j] \in H_2(X,E_p)$ with $\partial [\Gamma_j] = \gamma_j\,.$
\vskip 0.2in

\begin{center}
\scalebox{.65}{
  \begin{tikzpicture}
    \draw[thick] (-30mm,0mm) circle (5mm);
    \draw[thick] (-30mm,0mm) circle (15mm);
    \draw[thick,xshift=-30mm] (45:5mm) .. controls (30:10mm) and (45:15mm) .. (45:15mm);
    \draw[thick,xshift=-30mm,dotted] (45:5mm) .. controls (60:10mm) and (45:15mm) .. (45:15mm);
    \draw[thick,xshift=-30mm,] (225:5mm) .. controls (225+15:10mm) and (225:15mm) .. (225:15mm);
    \draw[thick,xshift=-30mm,dotted] (225:5mm) .. controls (225-15:10mm) and (225:15mm) .. (225:15mm);
    \draw[thick,xshift=-12mm] (45:6mm) .. controls (30:10mm) and (45:14mm) .. (45:14mm);
    \draw[thick,xshift=-12mm,dotted] (45:6mm) .. controls (60:10mm) and (45:14mm) .. (45:14mm);
    \fill[thick,xshift=-5mm] (-3mm,0mm) circle (.3mm);
    \fill[thick,xshift=-5mm] (0mm,0mm) circle (.3mm);
    \fill[thick,xshift=-5mm] (3mm,0mm) circle (.3mm);
    \draw[thick,xshift=23mm,yshift=2.6mm] (0mm,0mm) circle (5mm);
    \draw[thick,xshift=23mm,yshift=2.6mm] (-3mm,-3mm) circle (14mm);
    \draw[thick,xshift=23mm,yshift=.6mm] (50:6.1mm) .. controls (35:9mm) and (45:11mm) .. (45:11mm);
    \draw[thick,xshift=23mm,yshift=.6mm,dotted] (50:6.1mm) .. controls (55:9mm) and (45:11mm) .. (45:11mm);
    \draw[thick,xshift=35mm,dotted,yshift=.2mm] (45:8mm) .. controls (52:9.5mm) and (45:11mm) .. (45:11mm);
    \draw[thick,xshift=35mm,yshift=.2mm] (45:8mm) .. controls (38:9.5mm) and (45:11mm) .. (45:11mm);
    \fill[thick,xshift=43mm] (-3mm,0mm) circle (.3mm);
    \fill[thick,xshift=43mm] (0mm,0mm) circle (.3mm);
    \fill[thick,xshift=43mm] (3mm,0mm) circle (.3mm);
    \draw[thick,xshift=70mm,yshift=3mm] (0mm,0mm) circle (6mm);
    \draw[thick,xshift=70mm,yshift=3mm] (-4.8mm,-4.8mm) circle (13mm);
    \fill[xshift=70mm] (4.6mm,6.9mm) circle (.5mm);
    \draw[thick,dashed] (-19.5mm,10.5mm) -- (74.5mm,7mm);
    \draw[thick,dashed] (-26.5mm,3.5mm) -- (74.5mm,7mm);
    \node at (-28mm,-25mm) {$E_p=E_{\delta_j(0)}$};
    \node at (-27mm,10mm) {$C_j$};
    \node at (29mm,-4mm) {$C_{j,t}$};
    \draw[thick,->](30mm,0mm) -- (30mm,4mm);
    \node at (20mm,-25mm) {$E_{\delta_j(t)}$};
    \node at (20mm,20mm) {$P_j$};
    \node at (68mm,-25mm) {$E_{q_j}=E_{\delta_j(1)}$};
  \end{tikzpicture}
  }
\end{center}

We have the following alternate description when there is a Weierstrass equation:

\begin{prop}\cite{GrassiHalversonShaneson:Math}\label{roots} Let $X = W$ have the Weierstrass equation

\begin{equation}
    z y^2 = x^3 + f\, xz^2 + g\,z^3
  \end{equation}
  with section $\sigma$.
Then $E_q - {\sigma(q)}$ is the two-fold branched cover of $\C$ branched at the roots of
$x^3 + f(q)x + g(q) = 0\,.$ For $0 \le t \le 1\,,$ let $\rho_{j,1}(t)$ and $\rho_{j,2}(t)$ be continuous paths of two of the roots at $ \delta_j(t)\,,$ with the property $\rho_{j,1}(1) = \rho_{j,2}(1)$ at the singular point $q_j\,$ Let $\rho_{j,3}(t)$ be the path of the remaining root, and assume that for all $0 \le s,t \le 1\,,$ $\rho_{j,3}(s) \ne \rho_{j,1}(t)$ and   $\rho_{j,3}(s) \ne \rho_{j,2}(t)\,.$ Let $\bar C_{j,t}$ be the closed loop (not necessarily embedded) in $E_{\delta_j(t)}$ that lies over the path $ \rho_{j,t} = \{\rho_{j,1}(s), \rho_{j,2}(s)| t \le s \le 1\}\,.$ Then these loops have a consistent orientations so that if $\bar \Gamma_j = \bigcup_{0\le t \le 1}\bar C_{j,t}\,,$ then
$[\bar \Gamma_j] = [\Gamma_j]\,;$ in particular $\partial [\bar \Gamma_j] = \gamma_j\,.$
\end{prop}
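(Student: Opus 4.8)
The plan is to reduce everything to a computation in the relative homology of the restricted total space $N_j := \pi^{-1}(\delta_j([0,1]))$, which contains both thimbles $\Gamma_j$ and $\bar\Gamma_j$ and has $E_p$ as the fiber over $\delta_j(0) = p$. Since $[\Gamma_j]$ and $[\bar\Gamma_j]$ are by definition the images under $N_j \hookrightarrow X$ of classes living in $H_2(N_j, E_p)$, it suffices to prove $[\bar\Gamma_j] = [\Gamma_j]$ already in $H_2(N_j, E_p)$. The key structural observation is that the boundary map $\partial : H_2(N_j, E_p) \to H_1(E_p)$ is injective; granting this, two relative classes with the same boundary must coincide, and since we already know $\partial[\Gamma_j] = \gamma_j$, the whole proposition comes down to showing $\partial[\bar\Gamma_j] = \gamma_j$ as well (after fixing the sign, which is exactly what the ``consistent orientation'' clause arranges).

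For the injectivity I would compute the homotopy type of $N_j$. Over $[0,1)$ the fibration is a trivial $T^2$-bundle, and as $t \to 1$ the fiber degenerates by collapsing the vanishing cycle $\gamma_j$ to the node $\hat q_j$; hence $N_j$ deformation retracts onto the singular fiber $\pi^{-1}(q_j)$, which is the pinched torus $T^2/\gamma_j$, and the inclusion $E_p \hookrightarrow N_j$ is, up to homotopy, the collapsing quotient $c : T^2 \to T^2/\gamma_j$. From the long exact sequence of the pair $(T^2, \gamma_j)$ one checks that $c_*$ is an isomorphism on $H_2$ and that $c_* : H_1(T^2) \to H_1(T^2/\gamma_j)$ has kernel exactly $\Z\gamma_j$. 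Feeding this into the long exact sequence of $(N_j, E_p)$ shows that the map $H_2(N_j) \to H_2(N_j, E_p)$ is zero and that $\partial$ carries $H_2(N_j, E_p) \cong \Z$ isomorphically onto $\Z\gamma_j \subset H_1(E_p)$. This part is routine once the retraction is set up.

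The substantive step — and the one I expect to be the main obstacle — is to verify that $\bar\Gamma_j$ really is a relative $2$-cycle with $\partial[\bar\Gamma_j] = \gamma_j$. Two things must be checked. First, that the loops $\bar C_{j,t}$, each the lift to the double cover $E_{\delta_j(t)}$ of the arc joining the two colliding roots $\rho_{j,1}(t), \rho_{j,2}(t)$, can be oriented continuously in $t$ so that their union is a continuous image of a disk, degenerating to the node $\hat q_j$ as $t \to 1$ and having boundary $\bar C_{j,0}$ in $E_p$; this is where the consistent orientation is fixed, essentially by selecting a continuous branch of the square root defining the cover along the family of arcs. Second — the crux — that $[\bar C_{j,0}] = \gamma_j$ in $H_1(E_p)$. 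For this I would argue that the lift over an arc joining the two roots that collide at $q_j$ is precisely the vanishing cycle of the nodal degeneration: transporting the two roots around $q_j$ realizes the local monodromy on $E_{\delta_j(1-\epsilon)}$ as the Picard--Lefschetz transformation \ref{pl} with respect to $[\bar C_{j,1-\epsilon}]$, so $[\bar C_{j,1-\epsilon}]$ is fixed by $(\psi_{j,1-\epsilon})_*$; since the $+1$-eigenlattice of a Picard--Lefschetz involution on $H_1(T^2)$ is generated by the primitive vanishing class, and $\bar C_{j,1-\epsilon}$ collapses to $\hat q_j$, we obtain $[\bar C_{j,1-\epsilon}] = \pm[C_{j,1-\epsilon}]$, hence $[\bar C_{j,0}] = \pm\gamma_j$. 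Transporting through $\Psi_j$ and choosing the sign as above yields $\partial[\bar\Gamma_j] = \gamma_j$.

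With $\partial[\bar\Gamma_j] = \gamma_j = \partial[\Gamma_j]$ and $\partial$ injective on $H_2(N_j, E_p)$, we conclude $[\bar\Gamma_j] = [\Gamma_j]$ in $H_2(N_j, E_p)$, and therefore also in $H_2(X, E_p)$ after pushing forward along $N_j \hookrightarrow X$; the final assertion $\partial[\bar\Gamma_j] = \gamma_j$ is then immediate.
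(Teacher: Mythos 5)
You should first note that this paper does not actually contain a proof of Proposition \ref{roots}: it is stated with a citation to \cite{GrassiHalversonShaneson:Math} and the text immediately after it says the proof is ``presented in'' that reference. So there is no in-paper argument to compare against; I can only assess your proposal on its own terms. On those terms it is essentially sound and is the standard Lefschetz-theoretic route: localize to $N_j=\pi^{-1}(\delta_j([0,1]))$, use the deformation retraction of $N_j$ onto the nodal fiber $T^2/C_j$ to show $H_2(N_j,E_p)\cong\Z$ with $\partial$ injective onto $\Z\gamma_j$, and then reduce everything to the identity $[\bar C_{j,0}]=\pm\gamma_j$ in $H_1(E_p)$. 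Your homological bookkeeping (the collapse map being an isomorphism on $H_2$ and having kernel $\Z\gamma_j$ on $H_1$, hence $H_2(N_j)\to H_2(N_j,E_p)$ zero) is correct, and proving the equality already in $H_2(N_j,E_p)$ is if anything stronger than what is asserted.

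Two points in your crux step need tightening. First, the Picard--Lefschetz transformation $x\mapsto x-(x\cdot\gamma_j)\gamma_j$ is a transvection, not an involution; its fixed lattice in $H_1(T^2)$ is indeed $\Z\gamma_j$, but invariance of $[\bar C_{j,1-\epsilon}]$ only places that class in $\Z\gamma_j$. To conclude $[\bar C_{j,1-\epsilon}]=\pm\gamma_j$ rather than a proper multiple (or zero) you must add that for $\epsilon$ small the arc $\rho_{j,1-\epsilon}$ is embedded and disjoint from $\rho_{j,3}$ and $\infty$, so its preimage in the double cover of $\PP^1$ branched at four points is an embedded essential, hence primitive, circle. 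Second, the assertion that transport around $q_j$ ``realizes the local monodromy as the Picard--Lefschetz transformation with respect to $[\bar C_{j,1-\epsilon}]$'' is the thing to be proved, not a free input; the clean justification is the local normal form $y^2=(x-\rho_{j,1})(x-\rho_{j,2})\,u(x)$ with $u$ nonvanishing near the collision, i.e.\ the $A_1$ model $y^2=x^2-s$, in which the circle over the segment joining $\pm\sqrt{s}$ is visibly the cycle that collapses to the node and is exchanged-with-itself under the monodromy swapping the two roots. With those two repairs, and the observation that the continuous family $\bar C_{j,t}$, $0\le t\le 1-\epsilon_0$, transported through $\Psi_j$ gives $[\bar C_{j,0}]=(\Psi_j|\{0\})_*[\bar C_{j,1-\epsilon_0}]$, your argument closes correctly, with the consistent orientation fixing the residual sign.
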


The proof of the above proposition, presented in \cite{GrassiHalversonShaneson:Math},  also provides the following algorithm for determining vanishing cycles:

\begin{corol}\label{alg} Assume, in addition to the hypotheses of Proposition \ref{roots} (for simplicity) the roots $\rho_1,\rho_2,\rho_3$ of $x^3 + f(p)x + g(p) = 0\,,$ the first two being the ones that merge at $q_j\,,$ are not on a common (real) line. Let $m_1$ be the number of times $ \bar \rho_{j,0}$ crosses from one side of the interior of the straight line joining
$\rho_1= \rho_{j,1}(0)$ and $\rho_2 = \rho_{j,1}(0)$ to the other. Let $m_2$ be one half the sum of the intersection numbers of the path $\rho_{j,0}$ (with either orientation) and this straight line at the endpoints. (If an intersection at an endpoint is not transverse, make it so by a small perturbation and count any additional crossings of the line that this produces in $m_1$ as well.) Let $Z_1,Z_2,Z_3 \in H_1(E_p)$ be represented by loops that are inverse images in $E_p$ of straight lines joining $\rho_1$ and $\rho_2\,,$ $\rho_2$ and $\rho_3\,,$ and $\rho_3$ and $\rho_1\,.$ If $m_1 + m_2$ is even then $\gamma_j = \pm Z_1\,.$ If $m_1 + m_2$ is odd, then $\gamma_j = \pm Z_2 \pm Z_3\,,$ with any choice of signs so that $\gamma_j\cdot Z_1 \ne 0\,.$
\end{corol}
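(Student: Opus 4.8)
The plan is to reduce the statement to an intersection-number computation on the elliptic fiber $E_p$, viewed through Proposition~\ref{roots} as the double cover of $\C$ branched at $\rho_1,\rho_2,\rho_3$ (together with the point at infinity coming from the section $\sigma$). By Proposition~\ref{roots} the vanishing cycle is $\gamma_j=[\bar C_{j,0}]$, the class of the lift of the root-path $\rho_{j,0}$ running from $\rho_1$ to $\rho_2$; since both endpoints are branch points, this lift is an embedded circle. First I would record the three reference cycles $Z_1,Z_2,Z_3$, the lifts of the straight segments $[\rho_1,\rho_2]$, $[\rho_2,\rho_3]$, $[\rho_3,\rho_1]$, and the standard facts about them: each $Z_i$ is an embedded circle, $Z_i\cdot Z_i=0$, two lifts sharing exactly one branch point meet transversally once so $Z_i\cdot Z_j=\pm1$ for $i\neq j$, and there is a single linear relation $\epsilon_1Z_1+\epsilon_2Z_2+\epsilon_3Z_3=0$ with $\epsilon_i=\pm1$. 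In particular $\{Z_1,Z_2\}$ is a basis of $H_1(E_p;\Z)\cong\Z^2$ on which the intersection form is unimodular.

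The heart of the argument is the evaluation of the integer $n:=\gamma_j\cdot Z_1$. Here I would use the hyperelliptic involution $\iota$ of $E_p$, which interchanges the two sheets, preserves the orientation of $E_p$, and acts as $-\mathrm{id}$ on $H_1(E_p)$. Because $\iota$ is orientation preserving and reverses the orientation of each invariant cycle $\bar C_{j,0}$ and $Z_1$, the two preimages of a transverse interior crossing of the base arcs $\rho_{j,0}$ and $[\rho_1,\rho_2]$ contribute with the \emph{same} sign to $n$, whereas the shared endpoints $\rho_1,\rho_2$ are branch points with a single preimage and contribute a half-integer. Tracking these contributions is exactly what $m_1$ (interior crossings) and $m_2$ (the endpoint intersection numbers, with the factor $\tfrac12$ and the transversalising perturbation) are designed to record; summing them shows $n\equiv m_1+m_2\pmod 2$. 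Using the simplifying hypothesis that $\rho_1,\rho_2,\rho_3$ are not collinear, I would put $\rho_{j,0}$ in minimal position relative to the reference segments, so that the embedded circle $\gamma_j$ is one of the minimal classes and $n\in\{0,\pm2\}$, with $n=0$ exactly when $m_1+m_2$ is even.

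The conclusion is then linear algebra on $\Z^2$. If $m_1+m_2$ is even then $n=0$, so $\gamma_j$ lies in the orthogonal complement of the primitive class $Z_1$; for a unimodular symplectic form on $\Z^2$ this complement is $\Z Z_1$, and primitivity of the vanishing cycle forces $\gamma_j=\pm Z_1$. If $m_1+m_2$ is odd then $n=\pm2$, so $\gamma_j$ has nonzero $Z_2$-coefficient, and minimality gives $\gamma_j\in\{\pm Z_2\pm Z_3\}$. Among $Z_2+Z_3$ and $Z_2-Z_3$ the relation $\epsilon_1Z_1+\epsilon_2Z_2+\epsilon_3Z_3=0$ makes exactly one a multiple of $Z_1$ (hence orthogonal to $Z_1$) and the other meet $Z_1$ in $\pm2$; the prescription to choose signs so that $\gamma_j\cdot Z_1\neq0$ selects precisely the latter, which is primitive with $Z_1$-coefficient $\pm1$. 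The residual overall $\pm$ is the intrinsic sign ambiguity of a vanishing cycle.

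The main obstacle is the middle step: correctly converting the planar, partly unsigned crossing count $m_1+m_2$ into the integer $n=\gamma_j\cdot Z_1$. The delicate points are the bookkeeping at the branch-point endpoints (the single preimage, the factor $\tfrac12$, and the perturbation that feeds extra crossings back into $m_1$) and the verification that, in minimal position, $n$ takes only the values $0,\pm2$ rather than a larger even integer; this last point is where the non-collinearity hypothesis and the embeddedness of the vanishing circle are used.
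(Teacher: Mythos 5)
First, note that the paper does not actually prove Corollary~\ref{alg}: it states that the proof of Proposition~\ref{roots} given in \cite{GrassiHalversonShaneson:Math} ``also provides'' the algorithm, so there is no in-text argument to compare yours against. Your general framework is the right one and is the one the Proposition sets up --- view $E_p$ as the double cover of $\PP^1$ branched at $\rho_1,\rho_2,\rho_3,\infty$, take $\gamma_j$ to be the class of the lift of the root path $\rho_{j,0}$, record the standard facts about the reference cycles $Z_i$, and convert planar crossing data into intersection numbers upstairs using the fact that the deck involution is orientation preserving. The observation that the two preimages of an interior crossing contribute with equal signs, while a shared branch-point endpoint contributes through a single transverse crossing, is exactly the right mechanism.

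However, your central congruence $n=\gamma_j\cdot Z_1\equiv m_1+m_2\pmod 2$ cannot be correct, and the error is not merely notational. By your own accounting each interior crossing contributes $\pm 2$ to $n$, and each endpoint contributes $\pm 1$ (an integer, not a half-integer: two transverse circles through the single preimage of a branch point meet with local index $\pm1$); since there are exactly two shared endpoints, $n$ is \emph{always even}, so $n\bmod 2$ carries no information and certainly does not equal $m_1+m_2\bmod 2$ in general. The invariant that actually matches the weighting in the statement (interior crossings with weight $1$, endpoints with weight $\tfrac12$) is $\tfrac12\,\gamma_j\cdot Z_1$, and the relation you need is $\tfrac12\,\gamma_j\cdot Z_1\equiv m_1+m_2\pmod 2$; proving that requires the sign bookkeeping you defer. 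Second, even granting $\gamma_j\cdot Z_1\in\{0,\pm2\}$, the concluding ``linear algebra on $\Z^2$'' is incomplete in the odd case: writing $\gamma_j=aZ_1+bZ_2$, the condition $\gamma_j\cdot Z_1=\pm2$ fixes $b=\mp2$ but leaves $a$ arbitrary, and $aZ_1\pm 2Z_2$ is primitive for every odd $a$, so primitivity does not single out $\pm(Z_2-Z_3)=\pm(Z_1+2Z_2)$. You cannot repair this by ``putting $\rho_{j,0}$ in minimal position'': the isotopy class of the root path is dictated by the fibration, an isotopy does not change $[\gamma_j]$, and the class of the lift genuinely depends on the isotopy class of the arc in the complement of all three branch points (not just on a winding number around $\rho_3$), so a priori infinitely many primitive classes can occur. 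A complete argument must either compute all three numbers $\gamma_j\cdot Z_i$ from the crossing data and bound them, or justify from the hypotheses of Proposition~\ref{roots} (in particular the condition that $\rho_{j,3}$ stays away from the merging roots) why the arc is forced into one of the two asserted classes.
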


In \cite{GrassiHalversonShaneson2014, GrassiHalversonShaneson2013} and in the examples in this paper we apply
 Proposition  \ref{roots} and the algorithm in  simple special cases.

For the rest of this section we do not assume the existence of a Weierstrass model. Nevertheless, it can be shown \cite{GrassiHalversonShaneson:Math} that there exists a \emph{topological} section $\sigma: U \to X\,.$ We will not give a proof here, but the reason is that the global monodromy maps, which determine the topological type of the fibration, are isotopic to maps with a common fixed point.

\vskip 0.2in

\begin{defin} As above, consider the smooth elliptic fibration $\pi:X\longrightarrow U$ over
an open set $U\subset \C$ with $I_1$ singular fibers above $\Sigma =
\{q_1\, \dots, q_N \}$ and corresponding vanishing cycles $\{\gamma_1, \dots, \gamma_N\}$. Fix a base point $p\in B\setminus \Sigma$ with
$E_p=\pi^{-1}(p)$. \\
$J=(J_1,\dots,J_N)\in\Z^N$
is a {\bf junction} and the cycle $a_p(J) = a(J)=\sum_{1}^N J_i \gamma_i \in H_1(E_p,\Z)$ is
its {\bf asymptotic charge}.
\end{defin}

\begin{remark}A junction defines a chain (actually the image of a union of 2-disks) $\sum_1^N J_j\Gamma_j$ or $\sum_1^N J_j\bar \Gamma_j$ in $X\,,$ and hence a homology class

\begin{equation} [J]_p = \sum_1^NJ_j[\Gamma_j] \in H_2(X,E_p)\,,\end{equation}
(This homology class actually depends on the order of singular points, up to a cylcic permutation of order $N\,.$.)

Clearly $a(J) = \partial [J]_p \in H_1(E_p)\,;$ hence if $a(J) = 0\,,$ $[J]_p$ will be the image of a class in $[J] \in H_2(X)\,;$  it is only well defined up to a multiple of the image of orientation class $[E_p]$ of $E_p$ in $H_2(X)\,.$  It will be unique subject to the extra condition that $[J]\cdot \sigma(U) = 0\,;$ this intersection number is well defined because the image of the section is a proper submanifold. If $a(J) = 0\,,$ then from the explicit construction one can see that $[J]$ is spherical, i.e. represented by a map $S^2 \to X\,.$ The class $[J]$ also depends on the basepoint $p$ and the choice of paths.
\end{remark}

\begin{thm}\label{jct} Let $\bf J$ denote the abelian group  of junctions. Assume $U$ is the interior of a region bounded by a closed embedded smooth curve. Then $J \mapsto [J]_p= \sum_1^NJ_j[\Gamma_j] \in H_2(X,E_p)$ induces an isomorphism
\begin{equation}\label{JRel} {\bf J} \cong H_2(X,E_p)\end{equation}
and
$J \mapsto [J]$ induces an
isomorphism \begin{equation}\label{JCycle}\{J \in {\bf J}| a(J) = 0\} \cong H_2(X)/\Z[E_p] \cong \{x\in H_2(X)\,|\, x\cdot \sigma(U) = 0\}\,.\end{equation} \end{thm}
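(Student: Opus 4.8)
The plan is to establish both isomorphisms by exhibiting the prongs $\Gamma_j$ as a geometric basis adapted to a handle/cell decomposition of $X$ relative to $E_p$, and then to extract the second isomorphism from the first via the long exact sequence of the pair together with the section.

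First I would prove the isomorphism ${\bf J} \cong H_2(X,E_p)$. The key geometric input is that $U$ deformation retracts onto the union $\bigcup_j \delta_j$ of the chosen paths (a wedge of arcs, hence contractible), with the singular points $q_j$ as the only fibers that fail to be tori. I would thicken this to a handle decomposition: $X$ is built from $\pi^{-1}(p) \times (\text{collar})$ by attaching, for each $j$, a single $2$-handle whose core is exactly the prong $\Gamma_j$ (the thimble collapsing the vanishing cycle $\gamma_j$). Concretely, let $V = \pi^{-1}(N)$ where $N$ is a contractible neighborhood of $p$ union small collars of the $\delta_j$ stopping short of the $q_j$; then $V$ retracts to $E_p$, and $X$ is obtained from $V$ by attaching the $N$ thickened thimbles. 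Excision and the resulting cellular chain complex then give that $H_2(X,E_p)$ is free on the classes $[\Gamma_1],\dots,[\Gamma_N]$, with no relations in degree $2$ because the attaching is along $2$-handles over a contractible base with a single critical point per $q_j$. This is precisely the content that $J \mapsto [J]_p$ is an isomorphism onto $H_2(X,E_p)$. The hypothesis that $U$ is bounded by an embedded closed smooth curve is what guarantees $U$ is a disc (so this retraction and handle picture are valid and there are no extra $2$-cycles coming from the topology of the base).

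Next I would derive the second isomorphism from the first. Consider the homology long exact sequence of the pair $(X,E_p)$:
\begin{equation}
\cdots \to H_2(E_p) \to H_2(X) \xrightarrow{i_*} H_2(X,E_p) \xrightarrow{\partial} H_1(E_p) \to \cdots
\end{equation}
Under the identification ${\bf J} \cong H_2(X,E_p)$ just established, the boundary map $\partial$ sends $[J]_p$ to $a(J) = \sum_j J_j \gamma_j \in H_1(E_p)$, exactly the asymptotic charge, by the computation $\partial[\Gamma_j] = \gamma_j$ recorded before the theorem. Hence $\{J \mid a(J)=0\} = \ker \partial = \operatorname{im} i_*$. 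The image of $H_2(E_p) \to H_2(X)$ is generated by the fiber class $[E_p]$, so $i_*$ induces an isomorphism $H_2(X)/\Z[E_p] \xrightarrow{\sim} \operatorname{im} i_* = \ker\partial \cong \{J \mid a(J)=0\}$, which is the first half of \eqref{JCycle}. For the final identification with $\{x \in H_2(X) \mid x\cdot\sigma(U)=0\}$, I would use the topological section $\sigma$ (whose existence is asserted in the excerpt): the intersection pairing with the properly embedded $\sigma(U)$ defines a homomorphism $H_2(X) \to \Z$ sending $[E_p] \mapsto 1$ (a fiber meets the section transversally in one point), so this pairing splits the sequence $0 \to \Z[E_p] \to H_2(X) \to H_2(X)/\Z[E_p] \to 0$ and identifies the quotient with the kernel $\{x \mid x\cdot\sigma(U)=0\}$.

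The step I expect to be the main obstacle is making the handle/cell decomposition of the first isomorphism fully rigorous — in particular verifying that the thimbles $\Gamma_j$ attach as honest $2$-handles with no lower- or higher-dimensional interference, and that excision applies so that $H_2(X,E_p)$ is freely generated with no hidden relations. This requires controlling the local structure near each nodal fiber (using the Picard–Lefschetz description \eqref{pl} and the trivializations $\Psi_j$) and globally organizing the collar of $\bigcup_j \delta_j$; the ordering/cyclic-permutation subtlety noted in the preceding remark is exactly where care is needed. The second isomorphism, by contrast, is a formal consequence of the exact sequence once the section is in hand, so the real work is concentrated in the relative computation.
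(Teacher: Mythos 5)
Your proposal is correct, but for the main isomorphism \eqref{JRel} it takes a genuinely different route from the paper. You invoke the standard Lefschetz-fibration handle picture: $X$ is built from $\pi^{-1}(N)\simeq E_p$ by attaching one $2$-handle per nodal fiber, with core the thimble, so the relative cellular chain complex has only $2$-cells and $H_2(X,E_p)$ is free on the $[\Gamma_j]$ in one stroke. The paper instead splits the argument into two halves: injectivity onto a summand is proved by constructing, for each $q_j$, a ``dual'' prong $\Gamma_{j,\infty}$ running from $q_j$ out to $\partial\overline U$ and disjoint from all the $\delta_k$, and computing $[\Gamma_k]\cdot[\Gamma_{j,\infty}]=\pm\delta_{jk}$ under the topological pairing $H_2(X,E_p)\times H_2(\overline X-E_p,\overline X-X)\to\Z$; surjectivity is then obtained by a Wang-sequence/excision computation around each $\pi^{-1}\partial B_j(\epsilon_0)$ showing $H_2(X,E_p)$ is a quotient of $\Z^N$. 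Your approach is shorter if one is willing to cite the handle decomposition of a Lefschetz fibration (which is exactly the ``main obstacle'' you flag --- verifying the thimbles attach as honest $2$-handles; this is classical but does need the reference or a careful local model). The paper's approach is more self-contained and, importantly, the dual prongs $\Gamma_{j,\infty}$ and the sign $[\Gamma_j]\cdot[\Gamma_{j,\infty}]=-1$ are reused verbatim in the proof of Theorem \ref{int}, so its extra labor is not wasted. Your derivation of \eqref{JCycle} from \eqref{JRel} via the long exact sequence of $(X,E_p)$ and the splitting $x\mapsto x\cdot\sigma(U)$ coincides with the paper's.
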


\begin{remark} The hypothesis on $U$ can be weakened considerably at the cost of  added complications in the proof below. \end{remark}

\begin{proof} Recall from above that there exists a {topological} section $\sigma: U \to X$, which gives a splitting of the first map of the long exact homology sequence of a pair \cite{EilenbergSteenrod} :
\begin{equation} ....\to H_2(E_p) \to H_2(X) \to H_2(X,E_p) \to H_1(E_p) \to ...\end{equation}
since $[E_p] \cdot \sigma(U) = 1\,.$
The statement in (\ref{JRel}) follows then from (\ref{JCycle}) and the long exact sequence above.

The hypothesis on $U$ implies that there is a topological extension of $\pi$ to $\overline\pi:{\overline X} \to {\overline U}\subset \C\,,$ also a fiber bundle outside the points $\{q_1,...,q_N\}\,.$ There is a well defined topological intersection pairing \cite{Spanier}\cite{Bredon}\cite{Hatcher}

\begin{equation} H_2(X, E_p) \times H_2(\overline X - E_p, \overline X - X) \to \Z\,.\end{equation}
(Here is an intuitive geometric definition: Let $(A,\partial A)$ and $(B,\partial B)$ be oriented relative chains representing $\alpha$ and $\beta$ in these groups. Since $\partial A \cap \partial B = \emptyset\,,$ after an arbitrarily small deformation fixing the boundaries, it can be assumed $A$ and $B$ are in "general position", meaning that they intersect transversely in points in the interior of simplices. The intersection number $\alpha\cdot\beta$ will then be the number of these points, counted sign determined by the orientations. )

Possibly choosing $\epsilon_0$ above smaller, let $\hat\delta_j:[1-\epsilon_0] \to U$ be a small path, disjoint from $\delta_j$ except at the endpoint $\delta_j(1) = q_j = \hat\delta_j(1)\,,$ where the two paths meet  in one point. For example, in local coordinates as above around $q_j\,,$ take $\hat\delta_j(1-\epsilon) = \epsilon e^{2\pi i \hat\theta_j}$ for some angle $\hat\theta_j \ne \theta_j\,.$

Let $\hat C_{j,1-\epsilon}\,,$ $0 < \epsilon \le 1\,,$ be the corresponding vanishing cycle. Let

\begin{equation} \hat \Gamma_{j,\epsilon_0} = \bigcup_{0 <  \epsilon \le\epsilon_0}\hat C_{j,1-\epsilon} \cup \{\hat q_j\}\,.\end{equation}
be the corresponding local prong or thimble.
Let $\delta_{j,\infty}$ be a path from $\hat\delta_j(1-\epsilon_0) = \delta_{j,\infty}(1-\epsilon_0)$ to a point $\delta_{j,\infty}(0) \in \overline X - X\,,$
disjoint from the paths $\delta_k\,.$

\vskip 0.2in

\begin{center}
\scalebox{.65}{
  \begin{tikzpicture}
    \fill[thick] (0:40mm) circle (1mm);
    \draw[thick] (0,0) -- (0:40mm);
    \draw[thick] (0:40mm) -- +(130:5mm);
    \draw[thick,->] (0:40mm)+(130:5mm) .. controls +(60:25mm) and +(240:25mm) .. +(45:50mm);
    \node at (5:30mm) {$\delta_1$};
    \fill[thick] (30:40mm) circle (1mm);
    \draw[dashed] (30:40mm) circle (5mm);
    \draw[thick] (0,0) -- (30:40mm);
    \draw[thick] (30:40mm) -- +(180:5mm);
    \draw[thick,->] (30:40mm)+(180:5mm) .. controls +(95:25mm) and +(-85:25mm) .. +(5mm,50mm);
    \node at (58:62mm) {$\delta_{2,\infty}$};
    \node at (36:30mm) {$\delta_2$};
    \fill[thick] (-30:40mm) circle (1mm);
    \draw[dashed] (-30:40mm) circle (5mm);
    \draw[thick] (0,0) -- (-30:40mm);
    \draw[thick] (-30:40mm) -- +(240:5mm);
    \draw[thick,->] (-30:40mm)+(240:5mm) .. controls +(-60:25mm) and +(-240:25mm) .. +(-45:50mm);
    \node at (-35:30mm) {$\delta_N$};
    \fill[thick] (0:40mm) circle (1mm);
    \draw[dashed] (0:40mm) circle (5mm);
    \node at (5:30mm) {$\delta_1$};
    \draw[thick] (0,0) -- (180:40mm);
    \draw[thick] (180:40mm) -- +(-30:5mm);
    \draw[thick,->] (180:40mm)+(-30:5mm) .. controls +(-85:25mm) and +(95:25mm) .. +(-5mm,-50mm);
    \node at (175:30mm) {$\delta_j$};
    \draw[dashed] (180:40mm) circle (5mm);
    \fill[thick] (30:40mm) circle (1mm);
    \fill[thick] (180:40mm) circle (1mm);
    \fill[thick] (60:40mm) circle (1mm);
    \fill[thick] (90:40mm) circle (1mm);
    \fill[thick] (120:40mm) circle (1mm);
    \fill[thick] (150:40mm) circle (1mm);
    \fill[thick] (210:40mm) circle (1mm);
    \fill[thick] (240:40mm) circle (1mm);
    \fill[thick] (270:40mm) circle (1mm);
    \fill[thick] (300:40mm) circle (1mm);
    \draw[thick,dashed] (0mm,0mm) circle (10mm);
    \node at (-2mm,-2mm) {$P$};
    \fill[thick] (0,0) circle (.5mm);
    \node at (13:65mm) {$\delta_{1,\infty}$};
    \node at (-41:64mm) {$\delta_{N,\infty}$};
    \node at (210:60mm) {$\delta_{j,\infty}$};
\end{tikzpicture}
}
\end{center}

Let
\begin{equation} \Gamma_{j,\infty} = \Psi_{j,\infty}([0,1-\epsilon]\times
\hat C_{j,1-\epsilon_0})\cup \bigcup_{0 <  \epsilon \le\epsilon_0}\hat C_{j,1-\epsilon} \cup \{\hat q_j\} = \Psi_{j,\infty}([0,1-\epsilon]\times \hat C_{j,1-\epsilon_0})\cup\hat \Gamma_{j,\epsilon_0}\,,\end{equation}
be the corresponding prong, $\Psi_{j,\infty}$ a trivialization of $\overline\pi|{\overline\pi}\,^{-1}\delta_{j,\infty}([0,1-\epsilon_0])\,.$

Then

\begin{equation} \Gamma_{j,\infty}\cap \Gamma_j = \{q_j\}\end{equation}
transversely and

\begin{equation}
\Gamma_{j,\infty}\cap \Gamma_k = \emptyset \end{equation}
for $k \ne j\,.$ Therefore the classes $[ \Gamma_{j,\infty}] \in H_2(\overline X - E_p, \overline X - X)$ and $[\Gamma_k] \in H_2(X,E_p)$ satisfy

\begin{equation}\label{sign} [\Gamma_j]\cdot [ \Gamma_{j,\infty}] = \pm 1\qquad ; \qquad [\Gamma_k]\cdot [ \Gamma_{j,\infty}] = 0 \qquad{k\ne j}\,.\end{equation}
(The non-zero intersection number is actually $-1;$ this will be discussed in more detail in the proof of Theorem \ref{int}.)
Therefore the map $J \mapsto [J]_p$ induces an isomorphism of $\bf J$ onto a summand of $H_2(X,E_p)$ of rank $N\,.$

If $Z \to S^1$ is a smooth fiber bundle over a circle with fiber $F\,,$
then there is a monodromy $\phi: F \to F$ such that $Z$ is diffeomorphic to the mapping cylinder $F \times [0,1]/(x,0) \thicksim (\phi(x),1)$ via a diffeomorphism that carries the bundle projection to the map to map $(f,t) \mapsto e^{2\pi i t}\,.$ It follows from the Mayer Vietoris sequence \cite{EilenbergSteenrod}(or a spectral sequence argument \cite{McCleary}) that there is long exact sequence

\begin{equation}...\to H_k(F)^ {\textstyle{\phi_*-1\atop \rightarrow}} H_k(F) \to H_k(E) \to H_{k-1}(F ) \to...\end{equation}
Further, the fiber bundle has a section $\sigma: S^1 \to Z$ if and only if $\phi$ has a fixed point.

We apply this to $\pi^{-1}B_j(\epsilon_0)\,,$ the ball of radius $\epsilon_0$ in the local coordinates around $q_j\,.$ The boundary of this manifold is a bundle over the circle $\partial B_j(\epsilon_0)\,,$ with torus fiber. The monodromy is the the map $\psi_{i,1-\epsilon}\,.$ It follows that $H_1(\pi^{-1}\partial B_j(\epsilon_0)) \cong \Z\oplus \Z$ generated by the orientation class of a fiber and $[C_{j,1-\epsilon_0}\times \sigma(\partial B_j(\epsilon_0))]\,,$ and $H_2(\pi^{-1}\partial B_j(\epsilon_0)) \cong \Z\oplus \Z$ generated by any  class $\tau$ with $\tau \cdot \gamma_j = 1\,.$ and the class $[\sigma(\partial B_j(\epsilon_0))\,.$ However $\pi^{-1}B_j(\epsilon_0)$ collapses homotopically equivalently to the singular fiber over $E_{q_j}\,,$ whose second homology is generated by the image of the orientation class under the collapse, and whose first homology by the image of  $\tau\,,$ i.e. the collapse of a class represented by a curve $D_1$ that meets $C_{j,1-\epsilon}$ in one point. Also, $\sigma(\partial B_j(\epsilon_0)) = \partial \sigma(B_j(\epsilon_0))\,.$  Hence $H_2( \pi^{-1}B_j(\epsilon_0),\pi^{-1}\partial B_j(\epsilon_0)) \cong \Z\,,$ generated by the homology class of
$\sigma(B_j(\epsilon_0),\partial B_j(\epsilon_0))\,.$

Let $X_0 = \pi^{-1}U_0\,,$ $U_0 = U - \bigcup_1^N B_j(\epsilon_0)^\circ\,.$ We now claim the map

\begin{equation} H_2(X,E_p) \to H_2(X,X_0) \cong \bigoplus_{1}^N  H_2( \pi^{-1}B_j(\epsilon_0),\pi^{-1}\partial B_j(\epsilon_0))\,\end{equation}
is trivial. In fact, the image of the connecting homomorphism in the long exact homology sequence of the triple $(X,X_0,E_p) $ is generated by the classes
$[\sigma(\partial B_j(\epsilon_0))]\in H_1(X_0,E_p)\,,$
$\pi_*[\sigma(\partial B_j(\epsilon_0))] = [\partial B_j(\epsilon_0))] \in H_1(U_0,p)\,,$ and for $j=1,...,N$ these classes form a basis of this group. Therefore the connecting homomorphism is one-to-one.

More precisely, the inclusion
\begin{equation}\bigcup_1^N\delta_j([0,1-\epsilon_0]) \cup B_j(\epsilon_0)) \hookrightarrow U_0\end{equation}
is a homotopy equivalence, and hence so is

\begin{equation}\pi^{-1}\bigcup_1^N\delta_j([0,1-\epsilon_0]) \cup B_j(\epsilon_0)) \hookrightarrow X_0\end{equation}
It follows by excision that

\begin{equation} H_2(X_0,E_p) \cong H_2(X_0,\pi^{-1}\bigcup_1^N \delta_j([0,1-\epsilon_0]) \cong \bigoplus_1^N H_2(\pi^{-1}\partial B_j(\epsilon_0)),
E_{\delta_j(1-\epsilon_0)})\,.\end{equation}
Also by excision, for any bundle $Z \to S^1$ as discussed above, the quotient map induces and isomorphsim $H_i(F \times [0,1], F \times \{0,1\}) \to H_i(Z,F)\,.$ (This is part of one proof of the above long exact sequence.) Therefore, in our case
$H_2(\pi^{-1}\partial B_j(\epsilon_0)),
E_{\delta_j(1-\epsilon_0)}) \cong \Z\oplus\Z$ and one of the generators is represented by the closed class (i.e. in the image of the absolute homology) represented by the torus $[C_{j,1-\epsilon_0}\times \sigma(\partial B_j(\epsilon_0))]\,.$ This class obviously maps to zero in $H_2(X, E_p)\,;$ therefore this group is a quotient of a free abelian group of rank $N\,.$ Therefore, $J \mapsto [J]_p\,,$ already shown to be a one-to-one map onto a summand, is an isomorphism.
\end{proof}

We now outline how our construction also provides some topological information about the representation of homology classes in $H_2(X)$ by embedded spheres:

\begin{thm}\label{emb}(See also  \cite{GrassiHalversonShaneson2013}) Let $J = (J_1,...,J_N) \in \bf J$ with $a(J) = 0$
and $|J_i| \le 2\,.$ Then $[J] \in H_2(X)$ is represented  by a smoothly embedded
2-sphere $S^2 \subset X\,.$

\end{thm}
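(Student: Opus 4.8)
The plan is to represent $[J]$ by an explicit map of a closed surface and then, under the hypothesis $|J_i|\le 2$, to show that this surface can be taken smoothly embedded of genus zero. As observed in the Remark following Theorem \ref{jct}, when $a(J)=0$ the $2$-chain $\sum_1^N J_j\Gamma_j$ closes up to a spherical class: each prong $\Gamma_j$ is a disk with $\partial\Gamma_j=\gamma_j\subset E_p$, and since $\sum_1^N J_j\gamma_j=a(J)=0$ in $H_1(E_p)$ the total oriented boundary bounds a $2$-chain $F\subset E_p$. First I would take $|J_j|$ parallel copies of each prong (reversing orientation when $J_j<0$) and glue them to $F$, obtaining a closed, generically immersed surface $S$ representing $[J]$. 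A direct $\chi$-count shows that, with $F$ chosen planar, $S$ is connected of Euler characteristic $2$.

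Next I would locate the points where $S$ fails to be embedded. Since distinct prongs $\Gamma_j,\Gamma_k$ lie over the disjoint paths $\delta_j,\delta_k$ and meet only in the fiber $E_p$, all self-intersections of $S$ occur either inside $E_p$, from crossings of the (parallel copies of the) vanishing cycles $\gamma_j$ and self-intersections of the cap $F$, or at a tip $\hat q_j$ with $|J_j|=2$, where two copies of the thimble $\Gamma_j$ share the nodal point. This is precisely where the bound $|J_i|\le 2$ enters: with multiplicity at most two, each tip failure is a single transverse double point rather than a higher-order singularity, and the cap can be kept planar.

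The key step is to resolve these double points while preserving both embeddedness and genus zero. In the fiber $E_p$ I would apply the oriented smoothing to each transverse crossing of vanishing cycles, which is compatible with orientations and replaces a crossing by two disjoint arcs; choosing the resulting embedded multicurve to bound a planar $F$ keeps $\chi(S)=2$. At a tip with $|J_j|=2$, taking the second copy along a path $\hat\delta_j$ coming in at a different angle as in the proof of Theorem \ref{jct}, the two thimbles meet transversally at $\hat q_j$; in the local model $\pi:(x,y)\mapsto xy$ I would excise a small neighborhood of the origin from the two thimbles and reglue along the nearby smooth level $\{xy=\text{const}\}$, the standard smoothing of a node, connecting the two thimbles into a single embedded annulus with outer boundary $2\gamma_j$. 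Performing all fiber- and tip-resolutions simultaneously yields a smoothly embedded surface still representing $[J]$, with $\chi$ changed in a controlled way.

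Finally I would verify by the Euler-characteristic bookkeeping that the embedded surface is an $S^2$. The main obstacle is exactly this genus control: one must check that $F$ can be taken planar and that the node-smoothings at the tips and in the fiber do not introduce handles, so that the net Euler characteristic stays $2$ and the surface remains connected. It is the restriction $|J_i|\le 2$ that makes each local resolution the smoothing of a single node, removable without adding genus, whereas multiplicity $\ge 3$ would force pairs of pants at the tips and genuinely higher-genus contributions that this simple construction could not absorb.
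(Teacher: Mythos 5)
Your skeleton matches the paper's outline---prongs with coefficients $\pm 1$, a parallel copy over a deformed path $\hat\delta_j$ for the $|J_j|=2$ case with a node-smoothing at $\hat q_j$ producing an annulus, and a planar cap closing everything up---but there is a genuine gap in how you build the cap, and it is located exactly at the hard point of the argument. You place all the boundary curves $J_j\gamma_j$ in the single fiber $E_p$, where for $j\ne k$ they genuinely intersect ($\gamma_j\cdot\gamma_k$ is typically $\pm1$), and you propose to remove these intersections by the oriented smoothing of the $1$-dimensional multicurve. Two problems. First, the smoothing is an operation on curves, but your surface consists of prongs whose boundaries still pass through the crossing point; matching the prongs to the resolved boundary is the standard resolution of a transverse double point of an oriented surface in a $4$-manifold, and that operation costs Euler characteristic ($\chi$ drops by $2$ per double point, adding a handle or merging components). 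Your assertion that the resolution ``keeps $\chi(S)=2$'' is therefore unjustified and in general false. Second, even as a statement about curves on the torus, a null-homologous oriented multicurve on $E_p\cong T^2$ need not bound an embedded subsurface at all (the winding-number function can exceed $1$, e.g.\ for parallel copies ordered $+,+,-,-$), let alone a planar one, so the existence of your planar $F\subset E_p$ is not established.

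The paper sidesteps both issues with one move that your proposal is missing: it truncates the prongs to $\Gamma_{j,t}$ for a small $t>0$, so that the boundary circles $\pm C_{j,t}$ lie in \emph{distinct} nearby fibers $E_{\delta_j(t)}$ inside a product neighborhood $E_0\times D^2$ of the smooth fiber. These circles are then pairwise disjoint for free (the paths $\delta_j$ leave $p$ at different angles), and the cap is a punctured sphere constructed in the $4$-dimensional product neighborhood---where there is room to make it embedded and planar---rather than inside a single $2$-torus. With that modification no fiber-direction double points ever arise, the only intersections to resolve are the tips with $|J_j|=2$ (which you handle essentially as the paper does), and the genus count becomes the straightforward bookkeeping you were hoping for. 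As written, your construction would need a separate argument that the resolutions in $E_p$ do not raise the genus, and no such argument is available in general.
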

\begin{proof}(Outline) As  mentioned above, the construction of prongs and the proof of \ref{jct} can be used to provide an explicit geometric cycle representing $[J]\,,$ $J$  a junction with $a(J) = 0\,.$ Suppose first $|J_i| \le 1\,, $ $1 \le i \le N\,.$ The cycle representing $[J]$ can be then described as a union of some of the (oriented) prongs $\pm \Gamma_{j,t}$ for a small value of $t\,,$ together with a punctured sphere (a sphere with some disks removed), in a product neighborhood $E_0 \times D^2$ of the smooth fiber, that bounds the union of the bounding curves $\pm C_{j,t}\,.$ The construction of the punctured disk is indicated \cite{GrassiHalversonShaneson2013}, and, as explained there, the union of the prongs and the punctured disk is a smoothly embedded $S^2\,.$
This proves the case with $|J_i| \le 1$.
 For $|J_j| = \pm 2$ we need to consider $\Gamma_{j,t}$ and a parallel copy $\hat\Gamma_{j,t}$ of the corresponding prong, using a slight deformation of the path $\delta_j$ in the base. This construction is described in more detail below in the proof of the next theorem. The prong and the deformed prong can then be added to one another in a way that eliminates the intersection point at the singularity of $E_j$ and provides an annulus bounding  $\pm C_{j,t} \cup \pm\hat C_{j,t}\,.$ These annuli, the oriented prongs with coefficients $\pm 1\,,$ and the punctured sphere  bounding the union of the vanishing cycles in the different nearby smooth fibers again provide a smoothly embedded $S^2$ representing $[J] \in H_2(X)\,.$
\end{proof}
This result is similar in spirit to the (simpler) topological fact that in $H_2(\PP^2)$ a generator or twice a generator can be represented by a smoothly embedded $S^2\,$\cite{Hartshorne};  however, in $X$ these representatives are not algebraic. As is the case with $\PP^2$ \cite{Fox-Milnor}, it appears that every element of $H_2(X)$ can be represented by an topologically embedded $S^2$ with a non-locally smoothable point. It would be interesting the determine the minimal genus of a smoothly embedded closed (oriented) real 2-manifold representing a given general element of $H_2(X)\,.$

\begin{defin}\label{intj}Consider a junction $J$. Define a self-intersection
  \begin{equation}\label{langle}
    \langle J, J \rangle = -\sum_{k > j \ge 2}J_kJ_j\, \gamma_k\cdot\gamma_j - \sum_{j=1}^N J_j^2
  \end{equation}
  \end{defin}

\begin{thm}\label{int}
  Let  $J$be a junction with
 $a(J) = 0\,. $ Then the topological intersection number satisfies
  \begin{equation} [J] \cdot [J] = \langle J, J \rangle\,.\end{equation}
\end{thm}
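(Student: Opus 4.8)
The plan is to compute the topological self-intersection $[J]\cdot[J]$ geometrically, by representing $[J]\in H_2(X)$ by the explicit closed cycle used in Theorem \ref{emb} and intersecting it with a generic pushoff. Concretely, $[J]$ is realized by the union of the oriented, multiplicity-$J_j$ prongs $\sum_j J_j\Gamma_{j}$ together with a capping chain $B$ in a product neighborhood $E_0\times D^2$ of the smooth fiber whose boundary is $\sum_j J_j C_{j}$; such a $B$ exists precisely because $a(J)=0$. Since the nodes $\hat q_j$ must stay fixed, the pushoff cannot be a translation: instead, as in the proof of Theorem \ref{emb}, I would deform each path $\delta_j$ to a nearby parallel path $\hat\delta_j$ with the same endpoint $q_j$ but a different incoming angle, producing parallel prongs $\hat\Gamma_j$, and push $B$ off into a nearby fiber to a parallel chain $B'$. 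Because the paths $\delta_j$ and $\hat\delta_k$ with $j\ne k$ can be kept disjoint, a prong over $\delta_j$ cannot meet a prong over $\hat\delta_k$ for $j\ne k$, so the only interior intersections of the two representatives occur at the nodes (where $\Gamma_j$ meets $\hat\Gamma_j$) and in the product neighborhood of $E_0$ (where the capping chains and the incoming ends of the prongs meet). The self-intersection is the signed count of these two families.

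For the diagonal contribution I would use the standard local model of a node: $X$ is locally $\C^2$ with $\pi(u,v)=uv$, the node at the origin lying over $q_j$, the vanishing cycle over $t$ being the circle $\{\,|u|^2=|t|\,\}$, and the Lefschetz thimble over the ray $\arg t=\alpha$ being the Lagrangian disk $\{(u,e^{i\alpha}\bar u)\}$. The thimbles over rays at angles $0$ and $\alpha\ne0$ meet transversely at the origin alone, and computing the determinant of the two tangent planes against the complex orientation gives $2(\cos\alpha-1)<0$, i.e. local sign $-1$. Hence $\Gamma_j$ meets its pushoff $\hat\Gamma_j$ in a single negative point, and counting the multiplicities gives $-J_j^2$ at $q_j$ and $-\sum_{j=1}^N J_j^2$ in total. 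This is the diagonal part of $\langle J,J\rangle$, and the same local calculation explains the sign $-1$ promised after (\ref{sign}), since $\Gamma_{j,\infty}$ is just another thimble coming in to $q_j$ at a different angle.

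The cross terms should come entirely from the fiber region. The key point is that all prongs emanate from the common base point $p$, so the bounding cycles $\gamma_j=[C_j]$ sit in $E_p$ (carried to the angularly ordered points $\delta_j(t)\in D^2$) and the capping chain $B$ must bridge them. I would compute the contribution of this region to $[J]\cdot[J]$ as a self-linking of $B$ relative to its framed boundary, evaluated fiberwise in the trivial bundle $E_0\times D^2$: for each ordered pair $j<k$ the boundary circles $C_j,C_k$ at angularly separated points contribute their fiber intersection number $\gamma_k\cdot\gamma_j$, with multiplicity $J_jJ_k$ and a sign fixed by the counterclockwise order of the $\delta_i$ around $p$. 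Assembling these yields $-\sum_{k>j}J_kJ_j\,\gamma_k\cdot\gamma_j$. Because the intersection form on $H_1(E_p)$ is antisymmetric, this ordered sum is well defined and genuinely order dependent, matching the dependence of $[J]$ on the cyclic order noted in the Remark; moreover $a(J)=0$ forces $\sum_{k}J_kJ_1\,\gamma_k\cdot\gamma_1=J_1\big(\sum_k J_k\gamma_k\big)\cdot\gamma_1=0$, so the $j=1$ terms drop out and the sum agrees with the range $k>j\ge2$ used in Definition \ref{intj}.

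Adding the two contributions I would obtain
\[
  [J]\cdot[J] \;=\; -\sum_{j=1}^N J_j^2 \;-\; \sum_{k>j}J_kJ_j\,\gamma_k\cdot\gamma_j \;=\; \langle J,J\rangle .
\]
The hard part will be the fiber computation of the third paragraph: one must build $B$ and its pushoff explicitly enough to verify that each fiber intersection of $C_j$ with $C_k$ is counted exactly once, with the sign dictated by the angular ordering and by the orientation conventions for the $\gamma_i$ (which are defined only up to sign), and to check that no spurious intersections arise in the collar where the prongs join the capping chain. Keeping the orientations of the thimbles, of the vanishing cycles, and of the ambient complex structure mutually consistent throughout is where the real care is required; the node computation, by contrast, is a short local calculation.
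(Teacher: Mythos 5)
Your overall strategy --- represent $[J]$ by prongs plus a capping chain $B$ in a product neighborhood of the fiber, intersect with a pushoff, and split the count into a node contribution and a fiber contribution --- is sound, and your diagonal computation is correct: the local model $\{(u,e^{i\alpha}\bar u)\}$ does give a single transverse intersection of sign $2(\cos\alpha-1)<0$ at each node, which is exactly the sign $-1$ asserted after (\ref{sign}). (The paper gets that sign differently, via the mapping--cylinder description of $\pi^{-1}\partial B_j(\epsilon_0)$ and Picard--Lefschetz applied to a curve $w$ with $[w]\cdot[C_{j,1-\epsilon_0}]=-1$.) Your observation that $a(J)=0$ together with $\gamma_1\cdot\gamma_1=0$ kills the $j=1$ cross terms, reconciling the sum over $k>j$ with the range $k>j\ge 2$ of Definition \ref{intj}, is also correct and is a useful consistency check.

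The genuine gap is your third paragraph, which is where the entire content of the cross terms lives and which you have only planned, not proved. The paper avoids this difficulty by a different choice of pushoff: it never builds the capping chain $B$ at all. It takes paths $\hat\delta_k$ from a nearby basepoint $\hat p$ that deliberately \emph{do} cross the original paths $\delta_j$ --- once for each pair $k>j\ge 2$, in a circular arc at radius $r_k$ --- so that $\hat\Gamma_k\cap\Gamma_j$ is a transverse intersection of the two vanishing-cycle circles in the single fiber over the crossing point, contributing $\gamma_k\cdot\gamma_j$ on the nose; the sum $-\sum_{k>j\ge2}J_kJ_j\,\gamma_k\cdot\gamma_j-\sum_jJ_j^2$ then falls out of bilinearity as a computation of the relative pairing $[J]_{\hat p}\cdot[J]_p$, and only at the very end, using $a(J)=0$ and the normalization $[J]\cdot\sigma(U)=0$, is this identified with $[J]\cdot[J]$. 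By insisting that $\delta_j$ and $\hat\delta_k$ be disjoint for $j\ne k$, you relocate exactly this crossing combinatorics into $B\cap B'$ inside $E_0\times D^2$, where it becomes a self-linking computation for the framed link $\sum_jJ_jC_{j,t}\subset E_0\times\partial D^2$; that computation is not easier than the one you are trying to avoid --- it is the same ordered count in disguise, and getting the ordering and signs right there is precisely what remains to be done. So either carry out the $B\cap B'$ count explicitly (building $B$ from bands between consecutive $C_{j,t}$ in the angular order and tracking where the pushoff bands cross them), or adopt the paper's pushoff, for which the only nontrivial local input is the $-1$ at the nodes that you have already established.
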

\begin{remark} Different formulae are obtained by cyclically permuting the indices, i.e. rotating the small circle around $p\,.$ All these satisfy the conclusion of the Theorem. In fact, let $\hat p$ be a point near $p$ that lies between the paths $\delta_1$ and $\delta_2\,.$ Then there will be defined  a small ``deformation" of the paths $\delta_j\,,$ $j=1,...,N\,,$ to paths $\hat \delta_j$ from $\hat p$ to the points $q_j\,,$ which will be described in the proof. The corresponding prongs will actually represent classes
$[J]_p \in H_2(X- E_{\hat p},E_p)$ and $[J]_{\hat p} \in H_2(X - E_p,E_{\hat p})\,.$  There is also a well defined topological intersection pairing on these groups, and
it will be shown that

\begin{equation}[J]_p \cdot [J]_{\hat p} = \langle J, J \rangle\,.\end{equation}
Thus the above formula for classes which do not have asymptotic charge zero has a topological interpretation in terms of a deformation of the prongs, and all the possible formulas correspond to the different possible small deformations, up to a suitable notion of homotopy. 
\end{remark}
\begin{remark}
The missing index $j =1$ in the first term on the right side of \ref{langle} corresponds to the fact that in the deformation used below (see the figure in the proof), since the deformed basepoint $\hat p$ lies between between $\delta_1$ and $\delta_2\,,$ we can reach $q_1$ with a path near $\delta_1$ that does not intersect $\delta_1$ except at $q_1\,.$ We could also reach $q_2$ without crossing $\delta_2\,$ but on the opposite side of $\delta_2$ from that indicated in the figure, leading to a  change in the formula in the corresponding place in the second term.
\end{remark}
\begin{remark} An intersection pairing is defined by
\begin{equation} \langle J, K \rangle = {1\over 2}(\langle J+K, J + K\rangle - \langle J, J \rangle - \langle K, K \rangle)\,.\end{equation} It follows that

\begin{equation} [J] \cdot [K] = \langle J, K \rangle\,.\end{equation}
if $a(J) = a(K) = 0\,.$\end{remark}

\begin{proof}(of Theorem \ref{int}) We start by determining the sign in equation (\ref{sign}). We keep the same notation. It was shown in the proof of Theorem \ref{jct} that $H_2(\pi^{-1}B_j(\epsilon_0), E_{\delta_j(1-\epsilon_0)})$ is infinite cyclic and the connecting homomorphism to
$H_1(E_{\delta_j(1-\epsilon_0})$ is injective (with image generated by the class of the local vanishing cycle.) In the general fiber  bundle $Z \to S^1$ with monodromy $\phi$ as in the previous proof, let $w$ be a $k-1$ cycle of $F\,.$ Then the image of the relative $F \times\{0,1\}$  cycle $w \times [0,1]$ in the quotient $Z$ represents an element $S(w) \in H_k(Z,F)$ with
$\partial S(w) = (\phi_*[w]-[w]) \in H_1(F)\,.$ In our case, take for $w$ a curve with

$[w] \cdot [C_{j,1-\epsilon_0}] = -1\,.$ Then by \ref{pl}, $\partial S(w) = [C_{j,1-\epsilon_0}] \in H_1(E_{\delta_j(1-\epsilon_0}))\,.$ Let

\begin{equation} i_*:H_2(\pi^{-1}\partial B_j(\epsilon_0),E_{\delta_j(1-\epsilon_0)}) \to H_2(\pi^{-1}B_j(\epsilon_0), E_{\delta_j(1-\epsilon_0)})\end{equation}
be the map induced by inclusion. Then $\partial i_*S(w) = [C_{j,1-\epsilon_0}]$ also, by naturality. Therefore $i_*S(w) = [\Gamma_{j,\epsilon_0}]\,, $ the class of the local ``prong" corresponding to the restriction of $\delta_j$ to $[1-\epsilon_0,1]\,.$
The intersection pairing is also defined between elements of $H_2(X,E_{\delta_j(1-\epsilon_0)})\,,$ to which the target of $i_*$ maps by another inclusion induced map, and elements of $H_1(\overline X - E_{\delta_j(1-\epsilon_0)},\overline X - X)\,,$ in which $\Gamma_{j,\infty}$ represents an element. Therefore

\begin{equation}[\Gamma_{j,\epsilon_0}]\cdot [ \Gamma_{j,\infty}] = S(w)\cdot [\hat \Gamma_{j,\infty}] = [w \times \{\hat\theta_j\}]\cdot [\hat C_{j,1-\epsilon_0}] =[w]\cdot [C_{j,1-\epsilon_0}] = -1
\end{equation}
It follows from the local nature of intersection numbers that the local prongs $\Gamma_{j,\epsilon_0}$ and $\hat \Gamma_{j,\epsilon_0} $ meet with intersection number -1.

Let $\hat p$ be near $p\,,$ lying between the paths $\delta_1$ and $\delta_2\,,;$ for example, take $\hat p =
 \hat re^{2\pi \eta_1}$ in local the local coordinates near $p\,,$ $\hat r < r_0$ and $0 < \eta_1 < N^{-1}$ both small; we assume for $0 \le r \le r_0\,,$ $\delta_j(r) = re^{2\pi i\over N}$ in the local coordinates around $p\,,$ as above. Define paths $\hat\delta_j$ from $\hat p$ to $q_j$ in four parts as follows: First take a straight line from $\hat q$ to $r_j\,e^{2\pi i\eta}\,,$ $\hat r < r_1 < r_2 < .... < r_j < r_0\,.$ Follow this with a circular arc $r_je^{2\pi i t}\,,$ $\eta \le t \le {j-1\over N} + \eta\,.$ Parameterize what has been done so far so that $\hat \delta_j(0) = \hat p$ and $\hat\delta_j(r_j) =  r_je^{2\pi i({j-1\over N} +\eta_j)}\,.$ Then follow with paths parallel to $\delta_j\,,$ until we reach $B_j(\epsilon_0)$ at a point $\epsilon_0e^{2\pi i\hat\theta_j}$ in local coordinates around $q_j\,,$ and then follow the straight line to $q_j\,.$

\vskip 0.2in

\begin{center}
\scalebox{.65}{
  \begin{tikzpicture}
    \fill[thick] (0:40mm) circle (1mm);
    \fill[thick] (60:40mm) circle (1mm);
    \fill[thick] (120:40mm) circle (1mm);
    \fill[thick] (150:40mm) circle (.3mm);
    \fill[thick] (160:40mm) circle (.3mm);
    \fill[thick] (170:40mm) circle (.3mm);
    \fill[thick] (180:40mm) circle (.3mm);
    \fill[thick] (210:40mm) circle (1mm);
    \fill[thick] (240:40mm) circle (.3mm);
    \fill[thick] (250:40mm) circle (.3mm);
    \fill[thick] (260:40mm) circle (.3mm);
    \fill[thick] (270:40mm) circle (.3mm);
    \fill[thick] (300:40mm) circle (1mm);
    \draw[thick,dashed] (0:40mm) circle (8mm);
    \draw[thick,dashed] (60:40mm) circle (8mm);
    \draw[thick,dashed] (120:40mm) circle (8mm);
    \draw[thick,dashed] (210:40mm) circle (8mm);
    \draw[thick,dashed] (300:40mm) circle (8mm);
    \draw (0:0mm) -- (0:40mm);
    \draw (0+0:40mm)+(135+0:3mm)--(0+0:40mm);
    \draw (0+0:40mm)+(135+0:3mm)--(40:3.2mm);
    \draw (0+60:0mm) -- (0+60:40mm);
    \draw (0+60:40mm)+(135+60:3mm)--(0+60:40mm);
    \draw (0+60:40mm)+(135+60:3mm)--(0+87:4.5mm);
    \draw (87:4.5mm) arc (87:28:4.5mm);
    \draw (0+120:0mm)--(0+120:40mm);
    \draw (0+120:40mm)+(135+120:3mm)--(0+120:40mm);
    \draw (0+120:40mm)+(135+120:3mm)--(0+139:6mm);
    \draw (0+139:6mm) arc (139:20:6mm);
    \draw (0+210:0mm)--(0+210:40mm);
    \draw (0+210:40mm)+(135+210:3mm)--(0+210:40mm);
    \draw (0+210:40mm)+(135+210:3mm)--(220:12mm);
    \draw (220:12mm) arc (220:10:12mm);
    \draw (0+300:0mm)--(0+300:40mm);
    \draw (0+300:40mm)+(135+300:3mm)--(0+300:40mm);
    \draw (0+300:40mm)+(135+300:3mm)--(308:18mm);
    \draw (308:18mm) arc (308:6.5:18mm);
    \fill (30:7mm) circle (.3mm);
    \fill (30:9mm) circle (.3mm);
    \fill (30:11mm) circle (.3mm);
    \fill (30:13mm) circle (.3mm);
    \fill (30:15mm) circle (.3mm);
    \fill (30:17mm) circle (.3mm);
    \fill (180+30:12mm) circle (.5mm);
    \fill (180+30:18mm) circle (.5mm);
    \fill (360-60:18mm) circle (.5mm);
    \fill (90+30:12mm) circle (.5mm);
    \fill (90+30:6mm) circle (.5mm);
    \fill (90+30:18mm) circle (.5mm);
    \fill (90-30:12mm) circle (.5mm);
    \fill[red] (40:3.2mm) circle (.5mm);
    \fill (60:4.5mm) circle (.5mm);
    \fill (60:6mm) circle (.5mm);
    \fill (60:18mm) circle (.5mm);
    \node at (0:44mm) {$q_1$};
    \node at (60:44mm) {$q_2$};
    \node at (120:44mm) {$q_3$};
    \node at (180+30:44mm) {$q_j$};
    \node at (300:44mm) {$q_N$};
    \node at (-7+0:27mm) {$\delta_1$};
    \node at (10+0+2:27mm) {$\hat\delta_1$};
    \node at (-7+60:27mm) {$\delta_2$};
    \node at (10+60+2:26mm) {$\hat\delta_2$};
    \node at (-7+120:26mm) {$\delta_3$};
    \node at (10+120:27mm) {$\hat\delta_3$};
    \node at (-7+210:27mm) {$\delta_j$};
    \node at (10+210+2:26mm) {$\hat\delta_j$};
    \node at (-7+300:26mm) {$\delta_N$};
    \node at (10+300+3:27mm) {$\hat\delta_N$};
    \draw[thick,dashed] (0mm,0mm) circle (20mm);
    \fill[thick,red] (0,0) circle (.5mm);
    \end{tikzpicture}
    }
\end{center}

 Let $\hat \Gamma_j$ be the corresponding prongs. For $r \le r_0\,,$

 \begin{equation}i_r:H_2(X,E_{\delta_j(r)}) \to H_2(X,\pi^{-1}B_r(p)) \qquad \hat i_{r}:H_2(X,E_{\hat\delta_j(r)}) \to H_2(X,\pi^{-1}B_r(p)) \end{equation}
 be the maps induced by inclusion; these are isomorphisms (as the fiber bundle is trivial over $B_r(p)\,.$ Then it follows by letting $\eta,\hat r \to 0$ that
 \begin{equation} \hat i_r[\hat \Gamma_{j,r}] = i_r[\Gamma_{j,r}]\end{equation}
 and that  under the inclusion induced isomorphisms, $H_1(E_{\delta_j(r)})\cong H_1(\pi^{-1}B_r(p))$
 and $H_1(E_{\hat\delta_j(r)}) \cong H_1(\pi^{-1}B_r(p))\,,$ the vanishing cycles $[C_{j,r}]$ and $ [\hat C_{j,r}]$  have the same image. The orientation classes of the fibers also have the same image in $H_2\,.$ In particular if $\hat\delta_k(s) = \delta_j(t)\,,$ then
 \begin{equation}[\hat C_{k,s}]\cdot [ C_{j,t}] = [C_k]\cdot [ C_j] = [\hat C_k]\cdot [ \hat C_j]\,.\end{equation}

 Now we compute the intersection numbers of the two sets of prongs. If $k < j\,,$ then $\hat \Gamma_k \cap \Gamma_j = \emptyset\,;$ hence $[\hat \Gamma_k] \cdot [\Gamma_j] = 0\,.$
 \begin{equation} \hat \Gamma_1 \cap \Gamma_1 = \{q_1\}\,.\end{equation}
 It follows from the first paragraph that  $[\hat \Gamma_1] \cdot [\Gamma_1] = -1\,.$
 For $2 \le j \le N\,,$
 \begin{equation}
 \hat \Gamma_j \cap \Gamma_j = \{q_j\} \cup \Big[\hat C_{j,\hat\delta_j(s_j)}\cap  C_{j,\hat\delta_j(r_j)}\Big]\,,\end{equation}
 $s_j $ the unique value with $\hat \delta_j(s_j) = \delta_j(r_j) = r_je^{2\pi i(j-1)\over N}\,.$ At the point of intersection the path $\hat\delta$ meets $\delta$ with sign $-1\,;$ hence
 \begin{equation} [\hat \Gamma_j] \cdot [\Gamma_j] = -1 - [\hat C_{j,s_j}]\cdot [C_{j,r_j}] = -1 - [C_j]\cdot [C_j] = -1\,.\end{equation}
 Finally, if $k > j\,,$
 \begin{equation}
 \hat \Gamma_k \cap \Gamma_j = \hat C_{k,\hat\delta_j(s_{k,j})}\cap  C_{j,\hat\delta_j(r_j)}\,,\end{equation}
 $s_{k,j} $ the unique value with $\hat \delta_j(s_{k,j}) = \delta_j(r_j) = r_je^{2\pi i(j-1)\over N}\,.$
 Therefore
 \begin{equation} [\hat \Gamma_k] \cdot [\Gamma_j] =  [\hat C_{k,s_{k,j}}]\cdot [C_{j,r_j}] = [C_k]\cdot [C_j] \end{equation}

 Hence by bilinearity,

\begin{equation}
\Big(\sum_1^N J_j[\hat \Gamma_j]\Big)\cdot \Big(\sum_1^N J_j[ \Gamma_j]\Big) = -\sum_{k > j \ge 2}J_kJ_j\, \gamma_k\cdot\gamma_j - \sum_{j=1}^N J_j^2\,.\end{equation} In other words, in the junction notation, if $J = (J_1,...,J_N) $
\begin{equation} [J]_{\hat p} \cdot [J]_p =  \langle J, J \rangle\,.\end{equation}
Clearly, from the preceding paragraph, $\partial [J]_p = 0$ if and only if $\partial [J]_{\hat p} = 0\, .$ In this  case it is clear that any two closed classes $A,B \in H_2(X)$ with images
$[J]_p$ and $[J]_{\hat p}$ satisfy $B\cdot A = \langle J, J \rangle$ and have the same image in $H_2(X,\pi^{-1}B_r(p)) \cong H_2(X,E_p)\,.$ Therefore $A$ and $B$ are equal up to a multiple of the orientation class of $E_p\,;$ imposing the condition
$A\cdot \sigma(U) =  B\cdot\sigma(U) = 0$ then implies $A = B = [J]\,,$ so
$[J]\cdot [J] = \langle J, J \rangle\,.$
\end{proof}

\section{Deformations, String Junctions,  Lie Algebras and more}\label{DefAndMore}
In this section we consider deformations of elliptic surfaces and the
appearance of string junctions in the deformed geometry.  Let
$\pi:X\longrightarrow U$ be an elliptic fibration over an open set
$U\subset \C$ with section $\sigma$, with $\pi_W: W\longrightarrow U$
its associated Weierstrass model; suppose that the ramification divisor $\Sigma $ consists of the origin, namely $\Sigma = \{0\} \subset
U$.  $W$ has local equation $ y^2=x^3+f x+g$, then the possible singular fibers are described in the following table (see \cite{Mi81}):

\begin{center}
\begin{tabular}{ccccc}
  \shs\shs Kodaira Fiber Type \shs\shs & \shs{\it ord(f)}\shs & \shs{\it ord (g)}\shs & \shs{\it ord($\Delta$)}\shs   &
   \shs Singularity Type \shs \\
  \hline \hline
  smooth & $\geq 0$ & $\geq 0$ & $0$ &  $-$\\
  $I_n$  & $0$ & $0$ & $n$ &   $\T_{n-1}$ \\
  $II$ & $\geq 1$ & $1$ & $2$& $-$\\
  $III$ & $1$ & $\geq 2$ & $3$ &  $\T_1$ \\
  $IV$  & $\geq 2$ & $2$ & $4$ &   $\T_2$\\
  $I_n ^*$ & $2$ & $\geq 3$ & $n+6$ &  $\D_{n+4}$ \\
  $I_n ^*$ & $\geq 2$ & $3$ & $n+6$ &  $\D_{n+4}$ \\
  $IV^*$ & $\geq 3$ & $4$ & $8$ &  $\E_6$ \\
  $III^*$  & $3$ & $\geq 5$ & $9$ &  $\E_7$\\
  $II^*$  & $\geq 4$ & $5$ & $10$ &  $\E_8$ \\ \hline \hline
\end{tabular}
\end{center}

\vskip 0.1in

\begin{thm}
  \begin{itemize}
  \item[(i)] There exists a deformation $W_0$  of the Weierstrass equation, such that
    $\pi_0:W_0\rightarrow U$ is an elliptic fibration with $\pi_0^{-1}(q)$
    an $I_1$ singular fiber $\forall q \in \Sigma$.
  \item[(ii)] Let $J^{(-2)}$ be the set of junctions $J$ of $W_0$ such
    that $a(J)=0$ and $\langle J, J \rangle = -2$. Equivalently, let $J^{(-2)} \subset H_2(W_0)$ consists of those elements $x$ with $x\cdot x = -2$ and $x\cdot\sigma(U) = 0\,.$ Then $\sharp\,J^{(-2)}$ is
    the number of roots of the Lie algebra associated to the singularity of $\pi$.
  \item[(iii)] If $J = (J_1,...,J_N) \in J^{(-2)}\,,$ then $|J_i| \le 1\,.$ In particular, all elements of $J^{(-2)}$ are represented by smoothly embedded $S^2$'s.
  \item[(iv)] There exist  subsets
    $\{\alpha_1,...,\alpha_r\}\subset J^{(-2)}$ with $r$ elements such that $\langle \alpha_i, \alpha_j \rangle$
    is the negative Cartan matrix associated to the singularity of $\pi$.
  \end{itemize}
\end{thm}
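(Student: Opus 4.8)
The plan is to handle the four parts in order, with (ii) carrying essentially all the content: once the lattice $\{J\mid a(J)=0\}$ has been identified with the root lattice of $\g$, parts (iii) and (iv) follow, and (i) is only a preliminary deformation. For (i), I would perturb the Weierstrass data directly. Writing $f,g$ as power series in the disc coordinate $s$, I add generic lower-order terms so that the discriminant $\Delta=4f^3+27g^2$, which vanishes to order $\operatorname{ord}(\Delta)$ at $s=0$ in the table, acquires $\operatorname{ord}(\Delta)$ distinct simple zeros near the origin, all avoiding the locus $f=g=0$. A standard transversality argument shows that for generic small coefficients every resulting singular fibre is a node (Kodaira $I_1$) and the total space $W_0$ is smooth, so $N=\operatorname{ord}(\Delta)$.

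For (ii) the logical chain is tight. By Theorem \ref{jct} the zero-charge junctions are isomorphic to $\{x\in H_2(W_0)\mid x\cdot\sigma(U)=0\}$, and by Theorem \ref{int} the combinatorial form $\langle\,\cdot\,,\,\cdot\,\rangle$ is carried to the geometric intersection pairing there; so it suffices to identify this group, with its intersection form, as the negative-definite root lattice of $\g$. Here I invoke the classical input recalled in the introduction: near the central fibre $W_0$ is diffeomorphic to the minimal resolution of the original rational double point (deformation and resolution of an ADE surface singularity are diffeomorphic), whose second homology is spanned by the exceptional $(-2)$-curves with intersection matrix equal to the negated Cartan matrix. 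Because $[E_p]\cdot\sigma(U)=1$ while $Q\cdot\sigma(U)=0$, the condition $x\cdot\sigma(U)=0$ cuts $H_2(W_0)=Q\oplus\Z[E_p]$ down to exactly the root lattice $Q$ of the appropriate type $\mathfrak a,\mathfrak d,\mathfrak e$. Since in a negative-definite ADE lattice the vectors of square $-2$ are precisely the roots, $\sharp\,J^{(-2)}$ is the number of roots of $\g$. As a check, for $I_n$ all vanishing cycles are parallel, so $\gamma_k\cdot\gamma_j=0$ and $\langle J,J\rangle=-\sum_j J_j^2$ on $\ker a=\{\sum_j J_j=0\}$, recovering $\mathfrak a_{n-1}$ with its $n(n-1)$ roots $e_i-e_j$.

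For (iii) I make the embedding $Q\hookrightarrow\Z^N=\mathbf J$ explicit by computing the vanishing cycles $\gamma_i\in H_1(E_p)$ and their pairwise intersections through Proposition \ref{roots} and Corollary \ref{alg}, and then reading off the square-$(-2)$ vectors in prong coordinates. For $A_{n-1}$ this is immediate from the previous paragraph (the roots are $e_i-e_j$, so $|J_i|\le1$); for the $D$ and $E$ types it is a finite verification that every root has prong charges in $\{0,\pm1\}$. The embedded-sphere assertion is then immediate from Theorem \ref{emb}. For (iv), with $\ker a\cong Q$ and $J^{(-2)}$ equal to the set of roots, I choose a base (simple system) $\{\alpha_1,\dots,\alpha_r\}\subset J^{(-2)}$ of the root system; by the standard characterisation of Cartan matrices as Gram matrices of a simple system, $(\langle\alpha_i,\alpha_j\rangle)$ is the negated Cartan matrix of $\g$.

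The main obstacle is the lattice identification in (ii). Theorems \ref{jct} and \ref{int} already bridge junctions to $H_2(W_0)$ and the combinatorial form to the topological intersection pairing, so the genuine work is to show that $\{x\mid x\cdot\sigma(U)=0\}$ is the ADE root lattice; this rests on the diffeomorphism between the deformation and the resolution near the central fibre together with the classical identification of the exceptional configuration with the Dynkin diagram. Carrying this out uniformly, and in particular pinning down the precise embedding $Q\hookrightarrow\Z^N$ required for the sharp coefficient bound $|J_i|\le1$ in (iii), is what realistically forces the type-by-type computation of vanishing cycles via Corollary \ref{alg}.
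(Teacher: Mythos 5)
Your proposal is essentially correct, but it reaches the heart of the theorem, part (ii), by a genuinely different route from the paper. The paper's proof is explicitly computational and case by case: for each Kodaira type it writes down a concrete deformation of the local Weierstrass equation (e.g.\ $y^2=x^3-3a^2x+2a^3+s^{r+1}+\epsilon$ for $I_{r+1}$, and explicit perturbations for types $III$ and $IV$), locates the $N=\mathrm{ord}(\Delta)$ points of $\Sigma_\epsilon$, computes the vanishing cycles $\gamma_j$ via Proposition \ref{roots} and Corollary \ref{alg} (by hand in the easy cases, by computer in the $\D,\E$ cases, citing the earlier papers), and then reads off the basis of $\{J\mid a(J)=0\}$ and the Cartan matrix directly from the pairing $\langle\,\cdot\,,\,\cdot\,\rangle$; parts (ii)--(iv) all fall out of this single calculation. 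You instead use Theorems \ref{jct} and \ref{int} to reduce to identifying $\{x\in H_2(W_0)\mid x\cdot\sigma(U)=0\}$ as a lattice, and then import the classical fact that the Milnor fibre of an ADE germ is diffeomorphic to (a neighbourhood of the exceptional set in) its minimal resolution, so that this lattice is the root lattice $Q$. That argument is uniform across all types and avoids the computer verification, which is a real advantage; but it has one step that needs more care than you give it: you must check that the inclusion of the local Milnor fibre into the global surface $W_0$ induces an isometry of its $H_2$ onto the orthogonal complement of the section (equivalently, onto a complement of the radical $\Z[E_p]$ of the intersection form), and that the induced local deformation of the germ is generic enough to realize the full Milnor lattice. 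The paper sidesteps exactly this by working with explicit vanishing cycles. Note also that your route does not actually save the case-by-case work, because part (iii) --- the bound $|J_i|\le 1$ in prong coordinates --- is a statement about the specific embedding $Q\hookrightarrow\Z^N$ and, as you concede, still requires computing the $\gamma_j$ for each type; at that point one has effectively reproduced the paper's computation anyway.
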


\begin{proof}
We will illustrate the proof in three of the cases from the table. All the singularities on the table can be handled in the same way, see \cite{GrassiHalversonShaneson2014, GrassiHalversonShaneson2013}.
For the first case,
assume that $\pi^{-1}(0)$ is of type $I_{r+1}\,,$ in other words, an $\T_r$ singularity. Then by \cite{Mi81} the defining  equation in the complement of the image of the section $\sigma$ can be written near $\pi^{-1}(0)$ as
  \begin{equation}
    y^2 = x^3 - 3a^2 x + 2a^3 + s^{r+1}
  \end{equation}
The derivative on the right hand side vanishes for $x = \pm \,a\,.$ Therefore, assuming $U$ was small enough to exclude the $r+1$-st roots of $-2a^3\,,$ the singular locus is  $\Sigma= \{0\}$.

In this case, take for  $W_0$ be the deformation of
$W$ defined by
\begin{equation}
    y^2 = x^3 - 3 a^2 x + 2 a^3 + s^{r+1} + \epsilon
\end{equation}
for $\epsilon \in \C$. For $|\epsilon|$ small enough, the new discriminant locus of this equation will intersect $U$ in the singular set of the fibration
\begin{equation}\Sigma_\epsilon = \{e^{2\pi ij\over {r+1}}\epsilon_0\,|\,j = 0,...,r\}\end{equation}
with $\epsilon_0^{r+1} = \epsilon$ a specific $r+1$-st root. The fiber $\pi^{-1}(s) - \sigma(s)$ is the two fold branched cover of $\C$ branched along the roots of
$x^3 - 3 a^2 x + 2 a^3 + s^{r+1} + \epsilon = 0\,;$ in particular at each point of $\Sigma_\epsilon$ there is a multiple root corresponding to an $I_1$-singularity. Let $\delta_j(t) = te^{2\pi ij\over {r+1}}\epsilon_0\,,$ $0\le t\le  1\,, $be the straight line path from the origin to the $j$th point in $\Sigma_\epsilon\,.$ Then the equation of $\pi^{-1}(\delta_j(t)) - \sigma(s)\,,$
\begin{equation}
    y^2 = x^3 - 3 a^2 x + 2 a^3 + t^{r+1}\epsilon_0 + \epsilon\,,
\end{equation}
is independent of $j\,.$ For example, if $\epsilon$ is real and positive and we also take $\epsilon_0$ to be real and positive, then as we move along each path from zero to one, the two imaginary roots converge to a common real real of multiplicity two at the end point and the real root remains always real. In any case, whether we set it up this way or not, it follows from the preceding section that the vanishing cycles $\gamma_j$ are all equal; $\gamma_1 = ....= \gamma_{r+1}\,.$ From this it then also follows that
$\{J \in \bold J| a(J) = 0\}$ has the basis $\alpha_1,...,\alpha_r\,,$ with $\alpha_1 = (1,-1,0,...,0)\,,$ $\alpha_2 = (0,1,-1,0,...,0)\,,$ etc. Since $\gamma_j \cdot\gamma_k = 0$ because all these classes are equal, $\langle \alpha_j,\alpha_j\rangle = -2\,,$ $\langle \alpha_j,\alpha_k\rangle = 1$ for $|j- k| = 1$ and zero for $|j-k| \ge 2\,.$ This clearly implies the result for this case, we get the roots of the Dynkin diagram and Cartan matrix of $\su(r)\,.$

Next consider the case of singular fiber  $\pi^{-1}(0)$ of type $III\,,$ defined by
  \begin{equation}
    y^2 = x^3 + sx + s^2\,.
  \end{equation}
In this case for $U$ small enough the the discriminant locus is only the origin. A deformation  can be  defined by
  \begin{equation}
    y^2 = x^3 + (s+\epsilon)x + (s^2+\epsilon)
  \end{equation}
Then it is not hard to see that, for $U$ small enough,
$\Sigma_\epsilon = \{q_1,q_2,q_3 \}$ consists of three points. We take the basepoint to the origin $s = 0\,,$ which is now a smooth fiber $E_0 = \pi_\epsilon^{-1}(0)$ of the deformed fibration $\pi_\epsilon\,.$ The fiber  $E_0 - \sigma(0)$ is the double branched cover of $\C$ along the roots of $x^3 + \epsilon x + \epsilon = 0\,.$ The inverse image in the branched over of the three lines joining  these roots determine curves representing elements $Z_i \in H_1(E_0)\,;$ choose the orientations so that $Z_1 + Z_2 + Z_3 = 0\,.$ In this case the algorithm in the previous section yields, for the three vanishing cycles, $\gamma_i = Z_i\,.$ In \cite{GrassiHalversonShaneson2014} this is obtained for a specific choice of small $\epsilon$ using a computer program solving cubics, but the result can also be obtained (tediously) by hand. Therefore there are exactly two junctions, $J = (1,1,1)$ and its negative, with $\langle J , J \rangle = -2$ and $a(J) = 0\,,$ and we obtain the roots and Cartan matrix of  $\su (2)\,.$

The third case, of type IV, is given by the equation

\begin{equation}
    y^2 = x^3 + s^2x + s^2\,.
  \end{equation}

 \begin{equation}
    y^2 = x^3 + (s^2 + 2\epsilon)x + s^2 + \epsilon\,.
  \end{equation}
provides a deformation  of the local model.
Near the origin there are now four points in the deformed discriminant. Using the same notation of the preceding paragraph for the homology classes determined by the roots, this time we will get the vanishing cycles $\gamma_1 = \gamma_3 = Z_1$ and $\gamma_2=\gamma_4 = Z_3$ for the set of ordered vanishing cycles, with the signs chosen so that $Z_1\cdot Z_ 3 = 1\,.$ Again, this is done with a computer program in \cite{GrassiHalversonShaneson2014} for a specific choice of $\epsilon\,,$ but it can also be worked out by hand. The set of junctions with $a(J) = 0$ therefore has dimension two with basis, for example,
${J_1,J_2} = \{(1,0,-1,0),(-,1,0,-1)\}\,,$ $\langle J_i,J_i\rangle = - 2\,,$ $\langle J_1,J_2\rangle = 1\,,$ there are six elements in $J^{(-2)}\,,$ and we get the roots and negative Cartan matrix of $\su(3)\,.$

\end{proof}
The example of the fiber of type IV actually arises from restricting a general Weierstrass model
$\pi:W_g\rightarrow \mathbb{F}_3$
for an elliptic Calabi-Yau threefold over the Hirzebruch surface $\mathbb{F}_3$.  This is an example of a ``non-Higgsable cluster'' (in physics language) with a type $IV$ fiber; for this fibration, there  there exists no smoothing deformation of the
  global model to a fibration with only $I_1$ singularities \cite{MorrisonTaylor2012NHC}. Nevertheless

 \begin{equation}
    y^2 = x^3 + (s^2 + 2\epsilon)x + s^2 + \epsilon\,.
  \end{equation}
provides a deformation  of the local model.

In the papers \cite{GrassiHalversonShaneson2014, GrassiHalversonShaneson2013} we actually derive the entire representation structure of the Lie algebra associated to the singularity using geometric string junctions. However, perhaps the main advantage of this method is its usefulness in considering higher dimensional elliptic fibrations (see also \cite{GrassiHalversonShanesonTaylor2014}).

\section{Higher Dimension, higher codimension}\label{G2}
 In physics, matter can appear when there is a codimension two stratum in the discriminant locus, arising as the intersection of two codimension one strata. Resolutions may be hard to use or may not even be available in all cases; we will conclude this paper with two illustrative examples of the deformation technique.

The example will be an elliptic fibration  with section over an open set in $\C^2$  whose discriminant locus is the union of two curves meeting transversely in  a point. Over each general point in the complement of the intersection, on one component  we have an $I_0^*$ singularity, on the other an $I_1$ singularity. Then without loss of generality it can be assumed there is a local equation in Weierstrass form \cite{Mi81}:

\begin{equation}y^2 = x^3 - 3c^2s^2x + 2c^3s^3 + ats^3\,.\end{equation}
Here $s,t$ are coordinates in the base. The fibration over the line obtained by fixing a non-zero value of $t$ has an $I_0^*$ singularity at $s = 0\,.$ For fixed $s \ne 0$ the fibration has an $I_1$ singularity. The deformation we consider varies with $t$:
\begin{equation}y^2 = x^3 - 3c^2s^2x + 2c^3s^3 + ats^3 + t\epsilon.\end{equation}
For a fixed $t \neq 0 $ and fixed $s$ there are two possible singular points, $y = 0, x = \pm cs\,.$ For $x = cs\,,$ the corresponding points on the singular locus are the three cube roots of $-{\epsilon\over a}\,$ denoted by the red dots in the picture below; for $x = -cs$ the points are the cube roots of $-{\epsilon t\over at + 4c^3}\,.$
denoted by the blue dots in the picture below.

\vskip 0.2in

\begin{center}\label{G2disc}
  \begin{tikzpicture}[scale=.7]
    \draw[thick, ->] (0cm,0cm) -- (6cm,0cm);
    \draw[thick, dotted] (0cm,1.5cm) -- (7cm,1.5cm);
    \draw[thick, ->] (0cm,0cm) -- (0cm,3cm);
    \node at (6.2cm,0cm) {$s$};
    \node at (0cm,3.6cm) {$t$};
    \node at (8.5cm,1.5cm) {$t\ne0$, fixed};

    \draw[dotted,thick] (1.2cm,0cm) -- (1.2cm, 3cm);
    \draw[dotted,thick] (2.9cm,0cm) -- (2.9cm, 3cm);
    \draw[dotted,thick] (5.4cm,0cm) -- (5.4cm, 3cm);

    \fill[color=green] (0cm,0cm) circle (1mm);

    \fill[color=green] (1.2cm,0cm) circle (1mm);
    \fill[color=green] (2.9cm,0cm) circle (1mm);
    \fill[color=green] (5.4cm,0cm) circle (1mm);

    \fill[color=red] (1.2cm,1.5cm) circle (1mm);
    \fill[color=red] (2.9cm,1.5cm) circle (1mm);
    \fill[color=red] (5.4cm,1.5cm) circle (1mm);

    \fill[color=blue] (2.2cm,1.5cm) circle (1mm);
    \fill[color=blue] (3.8cm,1.5cm) circle (1mm);
    \fill[color=blue] (5.8cm,1.5cm) circle (1mm);

    \draw[color=blue,thick] (.06cm,.06cm) -- (2.2cm,1.51cm);
    \draw[color=blue,thick] (2.2cm,1.5cm) arc (135:45:1.15cm);
    \draw[color=blue,thick] (3.8cm,1.5cm) arc (-135:-45:1.42cm);
    \draw[color=blue,thick] (5.8cm,1.5cm) -- (6.5cm,2.3cm);
  \end{tikzpicture}
\end{center}
Each of these is an $I_1$ singularity and we have completely split the $I_0^*$ singularity along $s = 0\,.$
The smooth fiber $E_{0,t}$ for a fixed $t$ and $s = 0\,,$ minus the point at infinity i.e. minus $\sigma(0,t)\,,$ is the double cover of $\C$ branched along the roots of
$x^3 + t\epsilon = 0\,.$

Consider first the three cube roots of $-{\epsilon\over a}$ and let us assume that $\epsilon, \ a$  are real, $\epsilon > 0\,,$  $ a < 0\,.$ Fix $ t\ \neq 0$ and consider the (real) plane $\C$ in the variable $s$. Let $\delta(r)\,,$ $ 0 \le r \le 1\,,$ be the straight line path from the origin to the real cube root. Let $\zeta = e^{2\pi i \over 3}\,.$ Then $\zeta\delta(r)$ and $\zeta^2\delta(r)$ will the the paths to the other two roots. Then, if $\rho_1(r),\rho_2(r),\rho_3(r)$ are the roots of
\begin{equation}\label{root1} x^3 - 3c^2(\delta(r))^2x + 2c^3(\delta(r))^3 + at(\delta(r))^3 + t\epsilon = 0\,,\end{equation}
the roots of
\begin{equation}\label{root2} x^3 - 3c^2(\zeta^k\delta(r))^2x + 2c^3(\zeta^k\delta(r))^3 + at(\zeta^k\delta(r))^3 + t\epsilon = 0\,,\end{equation}
will be $\zeta^k\rho_1(r),\zeta^k\rho_2(r),\zeta^k\rho_3(r)\,.$ For example, if $t$ is a real positive number, then all along the path $\delta(r)$ the real root, say  it is $\rho_1(0)\,,$ remains real for $0 \le r \le 1\,$ whereas the complex roots
$\rho_2(0)$ and $\rho_3(0) = \overline{\rho_2(0)}$ remain complex until $r = 1\,,$ at which point they merge into a real root of multiplicity two. Then, along the path $\zeta\delta(r)\,,$ the roots $\zeta\rho_2(0) = \rho_3(0)$ and $\zeta\rho_3(0) = \rho_1(0)$ will merge while $\rho_2(r)$ remains disjoint from the paths taken by the other roots. Similarly along $\zeta^2\delta(r)\,,$ the roots $\rho_1(0)$ and $\rho_2(0)$ will merge. The paths of the merging roots of equation (\ref{root2}) will be the path of the merging roots of  equation (\ref{root1}) multiplied by $\zeta$ or $\zeta^2\,.$ If we assume $2 c^3 > -at\,,$ then it is not hard to see that the real root $\rho_1(s)$ decreases with $s\,.$ Therefore, since $\rho_1(s) + \rho_2(s) + \rho_3(s) = 3c^2(\delta(r))^2$ increases with $s\,,$ so does the real part of the complex roots. Therefore the path of the merging roots never crosses the line joining the complex roots of
$x^3 + t\epsilon = 0\,.$ It follows from the algorithm above or a simple direct argument that the vanishing cycle corresponding to the real cube root of $-{\epsilon\over a}$ will be a homology class $Z \in H_1(E_0)$  represented by the curve lying over this straight line.

The resulting determination of the vanishing cycles can be formulated as follows:
Multiplication by $\zeta\,,$ i.e. rotation through $2\pi\over 3\,,$ induces a homeomorphism of the smooth fiber $E_0\,,$ viewed as the double branched cover of $\bold P^1$ along the roots of $x^3 + t\epsilon = 0$ and ``infinity", and hence an isomorphism $\zeta_*:H_1(E_0) \to H_1(E_0)\,.$  The vanishing cycles corresponding the cube roots of $-{\epsilon\over a}$ will be $\{Z, \zeta_*Z, \zeta_*^2Z\}\,.$

Now suppose that we choose $c$ real so that $at + 4c^3 > 0\,.$ The same argument shows that we get the same vanishing cycles for these points also. When we order all six points by increasing argument in the complex plane, we therefore obtain as our ordered set of vanishing cycles $\{Z,\zeta_*^2Z,\zeta_*Z,Z,\zeta_*^2Z,\zeta_*Z\}\,.$ Note that, in the usual counterclockwise orientation, $Z\cdot \zeta Z = \zeta Z\cdot\zeta^2 Z = \zeta^2Z\cdot Z = 1\,.$ From the relation $1 + \zeta + \zeta^2 = 0\,,$ it follows that the set $\{J\,|\,a(J) = 0\}$ of junctions with vanishing asymptotic charge has dimension four.  The elements
\begin{equation} \alpha_1 = (0,0,0,-1,-1,-1)\,\,\,\alpha_2 = (0,0, -1,0,0,1)\,\,\,\alpha_3= (0,-1,0,0,1,0) \,\,\, \alpha_4 = (-1,0,1,-1,0,1)\end{equation}
are a basis and satisfy
\begin{align}&\langle \alpha_1,\alpha_1\rangle = \langle \alpha_2,\alpha_2\rangle = \langle \alpha_3,\alpha_3\rangle = \langle \alpha_4,\alpha_4\rangle = -2\cr&
\langle \alpha_1,\alpha_2\rangle = \langle \alpha_1,\alpha_3\rangle = \langle \alpha_1,\alpha_4\rangle = 1\cr&
\langle \alpha_2,\alpha_3\rangle = \langle \alpha_2,\alpha_4\rangle = \langle \alpha_
3,\alpha_4\rangle = 0\,.\end{align}
Thus we get the ${\mathfrak d_4}$ Dynkin diagram, and in fact this set of junctions gives the root lattice and weight structure of the ${\mathfrak d_4}$ Lie algebra; see \cite{GrassiHalversonShaneson2013} for more on the Lie algebra details given the geometric junctions.

Finally, we determine the monodromy around the component  $t = 0$ of the singular locus to exhibit the collapse of the ${\mathfrak d_4}$ algebra (in physics language the ``gauge algebra" ${\mathfrak d_4}$) to a ${\mathfrak g_2}$ algebra (``gauge algebra" in physics language). Instead of only $t$ real, take $t(\theta) = te^{i\theta}\,.$ For $t$ small enough, the cube roots of $-{\epsilon t(\theta)\over at + 4c^3}\,$ will be  closer to the origin than those of $-{\epsilon\over a}$ and will rotate clockwise in an almost circular motion around as $\theta$ goes from zero to $2\pi\,.$ When $\theta$ gets to $2\pi$ the roots will have been permuted by multiplication by $\zeta\,.$ For each $\theta\,,$ let $E_0(\theta)$ be the singular fiber over the origin, the branched cover of $\bold C$ along the roots of $x^3 + t(\theta)\epsilon = 0$ and infinity . Then multiplication by $e^{i\theta}$ induces ${(e^{i\theta})}_* : H_1(E_0)\to H_1(E_0(\theta))\,.$ Since the lines from the origin to these points on the discriminant will also rotate around with $\theta\,,$ only changing length slightly, it is clear that the vanishing cycles of the cube roots of $-{\epsilon t(\theta)\over at + 4c^3}$ will be $\{{(e^{i\theta\over 3})}_*Z, \ {(e^{i\theta\over 3})}_*\zeta _*Z, \  {(e^{i\theta\over 3})}_*\zeta_* ^2Z\}\,.$ Therefore the effect of $\theta$ going from $0$ to $2\pi$ is that the vanishing cycles do not change, but the three points on the discriminant locus undergo a rotation though $2\pi\over 3$ i.e. the order of all six vanishing cycles will have changed as the other three do not move.

In fact, as the three cube roots of $-{\epsilon\over a}$ do not move as $\theta$ changes, the ordered sets of pairs of roots that coalesce as we move out from zero to any of these cube roots will have be permuted cyclically each time $\theta$ goes around through $2\pi\,.$  (For each discriminant point and one value of $\theta\,,$ a third root will cross one of these at a value of $r$ less than one.) Thus,  when we get to $\theta = 2\pi\,,$ these vanishing cycles will have moved to  $\{\zeta_*Z, \zeta_*^2 Z, Z\}\,.$ In other words, the effect of $\theta$ going from zero to $2\pi$ will be, since these vanishing cycles moved and the other discriminant points rotated,

\begin{equation}\{Z,\zeta_*^2Z,\zeta_*Z,Z,\zeta_*^2Z,\zeta_*Z\}\mapsto \{\zeta_*Z,Z,\zeta_*^2 Z, \zeta_*Z,Z,\zeta_*^2 Z\}\,.\end{equation}
In other words, on the junction we have the ``outer monodromy" map $\lambda(a,b,c,a,b,c) = (c,a,b,c,a,b)\,.$
(Note: As $\theta$ rotates around, there will be three points where pairs of points on the discriminant locus are on the same line segments from the origin. This is the reason the "obvious" continuity argument gives the wrong result.)

Now we replace $\alpha_4$ with $\alpha_4' = (-1,0,0,1,0,0).$ The Dynkin diagram of the intersection form on the junctions $\alpha_1,\alpha_2,\alpha_3,\alpha_4'$ is also that of ${\mathfrak d_4}\,,$ and  $\alpha_4'$ is also a root. Clearly $\lambda(\alpha_1) = \alpha_1\,,$ $\lambda(\alpha_2) = \alpha_4'\,,$ and $\lambda(\alpha_4') = \alpha_3\,$ and $\lambda(\alpha_3) = \alpha_1\,.$
Therefore, this monodromy map fixes the root corresponding the the central node of the ${\mathfrak d_4}$ Dynkin diagram and permutes the other three nodes, and when we divide by this action, we get the ${\mathfrak g_2}$ Dynkin diagram. However, $\alpha_4'$ is not a simple root in the ${\mathfrak d_4}$ algebra. Or, to put it other way, if we take these to be simple roots, we do not get the simple root lattice of ${\mathfrak d_4\,.}$

The monodromy does not preserve the  Weyl chamber spanned by $\alpha_1,...,\alpha_4$ but moves it to a different one.  However, since it is of order three and must fix the central node because of invariance of intersection numbers, there are only two possibilities for what the monodromy induces on the Dynkin diagram, either a rotation of the extremal nodes or the identity. The calculation with the non-simple root eliminates the trivial case, and therefore it can be concluded from this argument using a non-simple root that the monodromy reduces the ${\mathfrak d_4}$ root lattice and weight structure, i.e. the Lie algebra, to that of ${\mathfrak g_2}\,.$

The result can also be establishes by considering the full set $J^{(-2)}$. In \cite{GrassiHalversonShaneson2013} we showed
that there are precisely $192$ four element subsets of $J^{(-2)}$ which can serve as simple roots; this number matches the dimension
of the Weyl group of ${\mathfrak d_4}$, as it should. Thus $J^{-2}$ contains the full data of the Lie algebra within it. It is straightforward
to show that the outer monodromy map preserves $J^{(-2)}$ and is not trivial. Thus it  gives an automorphism of the algebra,
and from the junction description  it is of order three; it cannot be trivial on the Dynkin diagram without being so on all of $J^{(-2)}\,.$ Therefore it must induces an action on the Dynkin diagram which reduces $\mathfrak{d}_4$
to $\mathfrak{g}_2$.

For further details on the root structure and the full representation theory  see \cite{GrassiHalversonShaneson2013}.

\bibliographystyle{siam}
\bibliography{mybibliography}

\end{document}